\newcommand{\C}{\mathbb{C}}
\newcommand{\ZZ}{\mathbb{Z}}
\newcommand{\QQ}{\mathbb{Q}}
\newcommand{\PP}{\mathbb{P}}
\newcommand{\OO}{\mathcal O}
\newcommand{\Ss}{\mathcal S}
\newcommand{\Sy}{\mathfrak S}
\newcommand{\Gr}{\hbox{Gr}}
\newcommand{\MM}{\mathcal M}
\newcommand{\rom}{\romannumeral}
\newcommand{\alb}{\hbox{Alb}}
\newcommand{\bir}{\hbox{Bir}}
\DeclareMathOperator{\aut}{Aut}
\DeclareMathOperator{\ima}{Im}
\newtheorem{theorem}{Theorem}[section]
\newtheorem{corollary}[theorem]{Corollary}
\newtheorem{proposition}[theorem]{Proposition}
\newtheorem{conjecture}[theorem]{Conjecture}
\newtheorem{remark}[theorem]{Remark}
\newtheorem{definition}[theorem]{Definition}
\newtheorem{convention}{Conventions}
\newtheorem{nonumbering}{Theorem}
\newtheorem{nonumberingc}{Corollary}
\newtheorem{nonumberingt}{Acknowledgements}
\begin{document}
\author[Robert Laterveer]
{Robert Laterveer}

\address{Institut de Recherche Math\'ematique Avanc\'ee,
CNRS -- Universit\'e 
de Strasbourg,\
7 Rue Ren\'e Des\-car\-tes, 67084 Strasbourg CEDEX,
FRANCE.}
\email{robert.laterveer@math.unistra.fr}

\title[Zero--cycles on self--products of surfaces]{Zero--cycles on self--products of surfaces: some new examples verifying Voisin's conjecture}

\begin{abstract} An old conjecture of Voisin describes how $0$--cycles of a surface $S$ should behave when pulled--back to the self--product $S^m$ for $m>p_g(S)$.
We exhibit some surfaces with large $p_g$ that verify Voisin's conjecture.
\end{abstract}

\keywords{Algebraic cycles, Chow groups, motives, Voisin conjecture, Kimura finite--dimensionality conjecture}
\subjclass[2010]{Primary 14C15, 14C25, 14C30.}

\maketitle

\section{Introduction}

Let $X$ be a smooth projective variety over $\C$, and let $A^i(X)_{\ZZ}:=CH^i(X)_{}$ denote the Chow groups of $X$ (i.e. the groups of codimension $i$ algebraic cycles on $X$ with $\ZZ$--coefficients, modulo rational equivalence \cite{F}). Let $A^i_{hom}(X)_{\ZZ}$ (and $A^i_{AJ}(X)_{\ZZ}$) denote the subgroup of homologically trivial (resp. Abel--Jacobi trivial) cycles.     

The Bloch--Beilinson--Murre conjectures present a beautiful and coherent dream--world in which Chow groups are determined by cohomology and the coniveau filtration \cite{J2}, \cite{J4}, \cite{Mur}, \cite{Kim}, \cite{MNP}, \cite{Vo}. The following particular instance of this dream--world was first formulated by Voisin:

\begin{conjecture}[Voisin 1993 \cite{V9}]\label{conj} Let $S$ be a smooth projective surface. Let $m$ be an integer larger than the geometric genus $p_g(S)$. Then for any $0$--cycles $a_1,\ldots,a_m\in A^2_{AJ}(S)_{\ZZ}$, one has
  \[ \sum_{\sigma\in\Sy_m} \hbox{sgn}(\sigma) a_{\sigma(1)}\times\cdots\times a_{\sigma(m)}=0\ \ \ \hbox{in}\ A^{2m}(S^m)_{\ZZ}\ .\]
  (Here $\Sy_m$ is the symmetric group on $m$ elements, and $ \hbox{sgn}(\sigma)$ is the sign of the permutation $\sigma$.)
  \end{conjecture}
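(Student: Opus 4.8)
The plan is to restrict attention to surfaces $S$ whose Chow motive $\mathfrak{h}(S)$ is finite--dimensional in the sense of Kimura and O'Sullivan; for the examples I have in mind this holds because $S$ is dominated by (or cut out of) products of curves and abelian varieties, for which finite--dimensionality is known and is preserved by the standard operations on motives. Given such an $S$, recall that the refined Chow--Künneth decomposition provides a transcendental motive $t_2(S)$, a direct summand of $\mathfrak{h}^2(S)$, with cohomological realisation $H^2_{\mathrm{tr}}(S)$ and with $A^2\bigl(t_2(S)\bigr)=A^2_{AJ}(S)_{\QQ}$. Since each $a_i$ is Abel--Jacobi trivial it is fixed by the projector $\pi_{t_2}$, and a short computation with permutation correspondences (reindexing $\sigma\mapsto\sigma^{-1}$, which preserves the sign) shows that
\[
  z:=\sum_{\sigma\in\Sy_m}\mathrm{sgn}(\sigma)\,a_{\sigma(1)}\times\cdots\times a_{\sigma(m)}
\]
equals, up to the unit $m!$, the image of $a_1\times\cdots\times a_m$ under the idempotent cutting out the summand $\wedge^m t_2(S)$ of $\mathfrak{h}(S^m)$. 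Hence $z$ lies in the subgroup $A^{2m}\bigl(\wedge^m t_2(S)\bigr)\subseteq A^{2m}(S^m)_{\QQ}$, and it suffices to prove that this group vanishes whenever $m>p_g(S)$.

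The Hodge--theoretic heart of the matter is elementary: the realisation of $\wedge^m t_2(S)$ is $\wedge^m H^2_{\mathrm{tr}}(S)$, whose Hodge component of type $(2m,0)$ is $\wedge^m\bigl(H^{2,0}(S)\bigr)$, and this vanishes as soon as $m>p_g(S)$ — there are simply not enough holomorphic $2$--forms to wedge together. Thus $\wedge^m H^2_{\mathrm{tr}}(S)\subseteq H^{2m}(S^m)$ is a Hodge substructure of coniveau $\geq 1$ sitting in the middle degree of the $2m$--fold $S^m$. The crux — and the main obstacle — is to upgrade this soft statement into one about Chow motives: I would show that $\wedge^m t_2(S)$ is, as a Chow motive, a direct summand of $\mathfrak{h}^{2m-2}(D)(-1)$ for a suitable divisor $D\subseteq S^m$. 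For the surfaces under consideration this should be extracted by combining finite--dimensionality of $\mathfrak{h}(S)$ with the known instances of the generalized Hodge conjecture (motives of abelian type, or the explicit geometry of the chosen surfaces — which is precisely why only ``special'' surfaces with large $p_g$ are accessible). Granting this, an induction along a Lefschetz/niveau filtration — peeling off one divisor at a time, each step lowering the cohomological support relative to the ambient dimension thanks to the coniveau bound — forces $A^{2m}\bigl(\wedge^m t_2(S)\bigr)=0$; equivalently, this is the Bloch--Beilinson--type vanishing $A^{2m}(N)=0$ for a finite--dimensional motive $N$ concentrated in cohomological degree $2m$ and of positive coniveau, now available because of the extra structure. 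Therefore $z=0$ in $A^{2m}(S^m)_{\QQ}$.

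It remains to deduce the integral statement of Conjecture~\ref{conj}. The cycle $z$ is an integral zero--cycle of degree $0$ which is moreover Albanese trivial on $S^m$: for each projection $p_i\colon S^m\to S$ one has $p_{i*}\bigl(a_{\sigma(1)}\times\cdots\times a_{\sigma(m)}\bigr)=\bigl(\prod_{k\neq i}\deg a_{\sigma(k)}\bigr)\,a_{\sigma(i)}=0$, since every $a_j\in A^2_{AJ}(S)_{\ZZ}$ has degree $0$, so each summand — and hence $z$ — maps to $0$ in $\mathrm{Alb}(S^m)=\mathrm{Alb}(S)^m$. By Roitman's theorem the torsion of $A^{2m}(S^m)_{\ZZ}$ injects into $\mathrm{Alb}(S^m)$; as $z$ is torsion (we have just shown $z_{\QQ}=0$) and Albanese trivial, it follows that $z=0$ in $A^{2m}(S^m)_{\ZZ}$, as wanted. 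The only part of this programme requiring genuine work — and the part that confines the method to special surfaces — is the passage from the Hodge--theoretic coniveau bound to the motivic decomposition of $\wedge^m t_2(S)$; everything else is formal or follows from finite--dimensionality together with classical results.
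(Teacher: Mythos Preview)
First, note that the statement you are ``proving'' is a \emph{conjecture}: the paper does not prove it in general, only for the specific surfaces listed in the main theorem. So the relevant comparison is between your strategy and the paper's strategy for those surfaces.

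Your outline is essentially the well--known conditional implication ``Kimura finite--dimensionality $+$ a strong form of the generalized Hodge conjecture for $S^m$ $\Rightarrow$ Voisin's conjecture'', which the paper in fact cites in the introduction (as \cite{BLP}). The decisive step in your sketch --- ``I would show that $\wedge^m t_2(S)$ is, as a Chow motive, a direct summand of $\mathfrak{h}^{2m-2}(D)(-1)$'' --- is exactly the generalized Hodge/Lefschetz standard conjecture for the relevant piece of $H^{2m}(S^m)$, and you give no mechanism for establishing it, even for the surfaces at hand. That is a genuine gap: finite--dimensionality alone does not produce such a motivic support statement, and ``motives of abelian type'' by itself is not enough either.

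The paper takes a genuinely different route that sidesteps this obstacle. Rather than analysing $\wedge^m t_2(S)$ directly, it exploits the concrete geometric feature that (for the surfaces in question, or for a covering surface $S'$) intersection product $A^1_{hom}\otimes A^1_{hom}\to A^2_{AJ}$ is surjective. Via the Albanese map this reduces the problem to an antisymmetrisation statement for cycles in $A^{q}_{(2)}(A)$ on an abelian variety $A$, and then a hard Lefschetz isomorphism (K\"unnemann) converts it into a statement about $0$--cycles in $A^g_{(2)}(A)$. At that point one invokes Vial's theorem \cite{Ch}, which does supply the required ``motivic coniveau'' input --- but for abelian varieties, where it is actually proved (using generically defined cycles and Hazama's GHC for generic abelian varieties), rather than for $S^m$. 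In the Ikeda double--cover case the paper uses a variant, transferring to the Fano surface and then to the intermediate Jacobian, and applying Vial's Proposition on correspondences in the sub--algebra generated by symmetric divisors. Your Roitman argument for passing from $\QQ$-- to $\ZZ$--coefficients is correct and matches the paper's.

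In short: your framework is the abstract conditional one the paper alludes to; the paper's contribution is precisely to replace the unproven GHC input on $S^m$ by a reduction to abelian varieties where the analogous input is available.
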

  
For surfaces of geometric genus $0$, Conjecture \ref{conj} reduces to Bloch's conjecture \cite{B}. For surfaces $S$ of geometric genus $1$, Conjecture \ref{conj} takes on a particularly simple form: in this case, the conjecture stipulates that any $a_1, a_2\in A^2_{AJ}(S)_{\ZZ}$ should verify the equality
  \[ a_1\times a_2 =a_2\times a_1\ \ \ \hbox{in}\ A^4(S\times S)_{\ZZ}\ .\]
  This conjecture is still open for a general $K3$ surface;
examples of surfaces of geometric genus $1$ verifying this conjecture are given in \cite{V9}, \cite{16.5}, \cite{19}, \cite{21}. One can also formulate versions of Conjecture \ref{conj} for higher--dimensional varieties; this is studied in \cite{V9}, \cite{17}, \cite{24.4}, \cite{24.5}, \cite{BLP}, \cite{LV}, \cite{Ch}.

On a historical note, it is interesting to observe that Voisin's Conjecture \ref{conj} antedates Kimura's conjecture ``all varieties have finite--dimensional motive'' \cite{Kim}. Both conjectures have a similar flavour: Chow groups of a surface $S$ should have controlled behaviour when pulled--back to the self--product $S^m$, for large $m$. 
The difference between Voisin's conjecture and Kimura's conjecture lies in the index $m$ which is much lower in Voisin's conjecture. In fact (as explained in \cite{BLP}), Voisin's conjecture follows from a combination of Kimura's conjecture with a strong form of the generalized Hodge conjecture.

The goal of the present note is to collect some (easy) examples of surfaces with geometric genus larger than $1$ verifying Voisin's conjecture. 
 
 \begin{nonumbering}[=Corollaries \ref{main1}, \ref{cor2}, \ref{cor4} and \ref{last}] The following surfaces verify Conjecture \ref{conj}:
 
 \item
 {(\rom1)} generalized Burniat type surfaces in the family $\Ss_{16}$ of \cite{BCF} ($p_g(S)=3$);
 
  \item
 {(\rom2)} the hypersurfaces $S\subset A/\iota$ considered in \cite{LNP}, where $A$ is an abelian threefold and $\iota$ is the $-1$-involution ($p_g(S)=3$);
 
 
  \item
 {(\rom3)} minimal surfaces $S$ of general type with $p_g(S)=q(S)=3$ and $K^2_S=6$;
 
 \item{(\rom4)} the double cover of certain cubic surfaces (among which the Fermat cubic)
 branched along the Hessian ($p_g(S)=4$); 
 
  \item
 {(\rom5)} the Fano surface of lines in a smooth cubic threefold ($p_g(S)=10$);
 
 \item{(\rom6)} the quotient $S=F/\iota$, where $F$ is the Fano surface of conics in a Verra threefold and $\iota$ is a certain involution ($p_g(S)=36$);
 
 \item{(\rom7)} the surface of bitangents $S$ of a general quartic in $\PP^3$ ($p_g(S)=45$);
 
 \item{(\rom8)} the singular locus $S$ of a general EPW sextic ($p_g(S)=45$).
 
 \end{nonumbering}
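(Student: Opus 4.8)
The plan is to reduce Conjecture~\ref{conj} for a surface $S$ to the vanishing of one Chow group of an exterior power of the transcendental motive, to record an abelian‑type criterion guaranteeing that vanishing, and then to verify the criterion family by family. Murre's Chow–K\"unneth decomposition exists for surfaces, so write $\mathfrak{h}(S)=\bigoplus_{i=0}^{4}\mathfrak{h}^i(S)$ with $\mathfrak{h}^2(S)=\mathfrak{h}^2_{alg}(S)\oplus\mathfrak{h}^2_{tr}(S)$; one has $A^2_{AJ}(S)_{\QQ}=A^2\bigl(\mathfrak{h}^2_{tr}(S)\bigr)_{\QQ}$. For $a_1,\dots,a_m\in A^2_{AJ}(S)_{\QQ}$ the exterior product $a_1\times\cdots\times a_m$ lies in $A^{2m}\bigl(\mathfrak{h}^2_{tr}(S)^{\otimes m}\bigr)$, and (as $\mathfrak{h}^2_{tr}(S)$ sits in even cohomological degree) the alternating idempotent $\tfrac1{m!}\sum_{\sigma\in\Sy_m}\operatorname{sgn}(\sigma)\,\sigma$ is a well‑defined projector onto $\wedge^m\mathfrak{h}^2_{tr}(S)$, so the alternating sum in Conjecture~\ref{conj} is exactly the image of $a_1\times\cdots\times a_m$ in $A^{2m}\bigl(\wedge^m\mathfrak{h}^2_{tr}(S)\bigr)_{\QQ}$. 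Hence it is enough to prove $A^{2m}\bigl(\wedge^m\mathfrak{h}^2_{tr}(S)\bigr)_{\QQ}=0$ for $m>p_g(S)$. Expanding the alternating sum of $m$ cycles according to which $a_i$ occupies the last tensor factor expresses it through alternating sums of $m-1$ cycles, so the case $m=p_g(S)+1$ implies all larger $m$; and as each $a_i$ is Albanese‑trivial, a rationally trivial alternating sum is a torsion cycle of trivial Albanese class, hence $0$ by Roitman's theorem. So it suffices to treat $m=p_g(S)+1$ with $\QQ$‑coefficients.

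The criterion I would use is: \emph{if $\mathfrak{h}^2_{tr}(S)$ is a direct summand of $\mathfrak{h}^2(B)$ for some abelian variety $B$, then $S$ verifies Conjecture~\ref{conj}.} Granting it, $\wedge^m\mathfrak{h}^2_{tr}(S)$ is again a direct summand of the motive of an abelian variety (in particular finite‑dimensional), with cohomology the weight‑$2m$ Hodge structure $\wedge^m H^2_{tr}(S)$. The heart of the matter is a count of Hodge numbers: $H^2_{tr}(S)$ carries exactly $p_g(S)$ classes of holomorphic degree $2$ (its $(2,0)$‑part), all its remaining classes having holomorphic degree $\le 1$, so any wedge of $m>p_g(S)$ of them has holomorphic degree $\le m+p_g(S)$; therefore $\wedge^m H^2_{tr}(S)$ has coniveau $\ge m-p_g(S)\ge 1$, which is what forces the bound $m>p_g(S)$. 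For a direct summand of the motive of an abelian variety this Hodge coniveau is geometric --- the cohomology of an abelian variety being generated in degree $1$, coniveau is carried by abelian subvarieties and translates thereof (cf.\ Vial's work on the niveau filtration and the available cases of the generalized Hodge conjecture for abelian‑type motives). Thus $\wedge^m\mathfrak{h}^2_{tr}(S)\cong M\otimes\LLL^{\,m-p_g(S)}$ for an abelian‑type motive $M$ whose cohomology is concentrated in degree $2p_g(S)$, and so $A^{2m}\bigl(\wedge^m\mathfrak{h}^2_{tr}(S)\bigr)_{\QQ}\cong A^{\,m+p_g(S)}(M)_{\QQ}$. The latter vanishes by Beauville's decomposition of the Chow groups of an abelian variety (valid for abelian‑type motives, using finite‑dimensionality to refine the Chow–K\"unneth projectors), which gives $A^i(N)_{\QQ}=0$ whenever $N$ has cohomology concentrated in a single degree $j<i$; here $j=2p_g(S)<m+p_g(S)=i$ precisely because $m>p_g(S)$.

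It remains to verify, for each family, that $\mathfrak{h}^2_{tr}(S)$ is a direct summand of $\mathfrak{h}^2$ of an abelian variety. For (ii) this is immediate: by construction $\mathfrak{h}^2_{tr}(S)$ is a summand of $\mathfrak{h}^2(A/\iota)$, hence of $\mathfrak{h}^2(A)$ (one quotes \cite{LNP}). For (iii) one uses that $S$ embeds into its Albanese threefold $A$ and that $\mathfrak{h}^2_{tr}(S)$ is a summand of $\mathfrak{h}^2(A)$. For (v) it is known that $\mathfrak{h}(F)$ is of abelian type, with $\mathfrak{h}^2(F)\cong\wedge^2\mathfrak{h}^1\bigl(J(X)\bigr)$ for the (fivefold) intermediate Jacobian $J(X)$, whence $\mathfrak{h}^2_{tr}(F)$ is a summand of $\mathfrak{h}^2(J(X))$. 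Cases (i) and (iv) are settled by quoting the explicit motivic descriptions arising respectively from the abelian covering structure of the generalized Burniat surfaces \cite{BCF} and from the Fermat/CM geometry of the cubic surface and its Hessian. For (vi), (vii), (viii) the transcendental motive is, respectively: an $\iota$‑invariant summand of $\mathfrak{h}^2$ of the intermediate Jacobian of the Verra threefold (an abelian ninefold); a summand of $\mathfrak{h}^2$ of the intermediate Jacobian of the quartic double solid attached to the quartic surface (an abelian tenfold); and a summand of $\mathfrak{h}^2$ of the abelian tenfold attached to a general EPW sextic.

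The main obstacle lies exactly in the families (i), (iv), (vi), (vii), (viii): exhibiting the correct abelian variety together with an explicit correspondence realizing $\mathfrak{h}^2_{tr}(S)$ as a direct summand of its $\mathfrak{h}^2$ requires real geometric input --- incidence varieties and the Hodge theory of Verra threefolds, quartic double solids and EPW sextics --- which is imported from the cited literature. A secondary delicate point is the passage from Hodge to geometric coniveau in the criterion above, i.e.\ ensuring that the pertinent cases of the generalized Hodge conjecture for abelian‑type motives are indeed available; granting that, the rest of the argument is formal.
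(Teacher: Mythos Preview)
Your motivic reformulation is close in spirit to the paper's argument --- both ultimately rest on Vial's results in \cite{Ch} --- but the step you label ``secondary'' is in fact the whole difficulty, and as stated it does not go through. The passage from Hodge coniveau of $\wedge^m H^2_{tr}(S)$ to a motivic splitting $\wedge^m\mathfrak{h}^2_{tr}(S)\cong M\otimes\LLL^{\,m-p_g(S)}$ is \emph{not} known for arbitrary summands of abelian--variety motives: the generalized Hodge conjecture is open for abelian varieties, let alone its motivic upgrade. What Vial actually proves (and what the paper uses) is the vanishing of the alternating sum of $0$--cycles $c_1,\dots,c_m\in A^g_{(2)}(B)$ once $m>\binom{g}{2}$, obtained by specialising to \emph{generic} abelian varieties and invoking Hazama's theorem. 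The paper reaches this $0$--cycle statement concretely: one first passes from $A^2_{(2)}(B)$ to $A^g_{(2)}(B)$ via K\"unnemann's hard Lefschetz, and then applies Vial. The bound thus obtained is $m>\binom{g}{2}$, not $m>p_g(S)$; these happen to coincide in cases (\rom1)--(\rom3) and (\rom5)--(\rom8) because there $p_g(S)=\binom{q(S^\prime)}{2}$ for the relevant cover $S^\prime$, an equality the paper checks case by case using explicit geometric input (Peters \cite{Chris} for (\rom1); \cite{LNP} for (\rom2); Voisin's results \cite{V8} on Fano surfaces of conics for (\rom6)--(\rom8), which furnish the surjection $A^1_{hom}\otimes A^1_{hom}\twoheadrightarrow A^2_{AJ}$ that you need but do not cite).

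Case (\rom4) is genuinely different, and your uniform criterion cannot handle it. Here $p_g(S)=4$, but the abelian variety in play is the $5$--dimensional intermediate Jacobian $A$ of the associated cubic threefold $Z$ (via Ikeda's dominant rational map $F(Z)\dashrightarrow S$), and $\binom{5}{2}=10\ne 4$. The paper therefore uses a separate result of Vial \cite[Theorem 3.15]{Ch}: if $B$ is isogenous to a product of (powers of) at most three elliptic curves and $\Gamma$ is an idempotent in the subalgebra generated by symmetric divisors with $\Gamma^\ast H^{j,0}(B)=0$ for all $j$, then $\Gamma_\ast A^g(B)=0$. This is exactly why the statement restricts to cubics in the pencil $x_0^3+x_1^3+x_2^3-3\lambda x_0x_1x_2+x_3^3=0$: Roulleau \cite{Rou} shows that for these the intermediate Jacobian is isogenous to $E_0^3\times E_\lambda^2$. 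Your reference to ``Fermat/CM geometry'' points in the right direction, but the argument requires this explicit isogeny together with the specific Vial result, not a general coniveau principle.
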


  A by--product of the proof is that these surfaces all have finite--dimensional motive, in the sense of Kimura \cite{Kim} (this appears to be a new observation for cases (\rom6)--(\rom8)). Also,
  certain instances of the generalized Hodge conjecture are verified:
 
 \begin{nonumberingc}[=Corollary \ref{ghc}] Let $S$ be any of the above surfaces, and let $m>p_g(S)$. Then the sub--Hodge structure
   \[ \wedge^m H^2(S,\QQ)\ \subset\ H^{2m}(S^m,\QQ) \]
   is supported on a divisor.
   \end{nonumberingc}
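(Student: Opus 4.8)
The plan is to deduce the Corollary from the Theorem, using that each of the eight surfaces $S$ listed there verifies Voisin's Conjecture \ref{conj} for every $m>p_g(S)$ and has finite-dimensional motive \cite{Kim} (only the first property is logically needed below). First I would reduce to the transcendental part: writing $H^2(S,\QQ)=\mathrm{NS}(S)_\QQ\oplus H^2_{\mathrm{tr}}(S,\QQ)$ as a direct sum of $\QQ$-Hodge structures --- a Tate part $\QQ(-1)^{\oplus\rho(S)}$ and its orthogonal complement --- and taking exterior powers,
\[ \wedge^m H^2(S,\QQ)=\bigoplus_{a+b=m}\wedge^a\mathrm{NS}(S)_\QQ\otimes\wedge^b H^2_{\mathrm{tr}}(S,\QQ)\,. \]
Inside $H^{2m}(S^m,\QQ)$, every summand with $a\ge1$ is spanned by Künneth classes having at least one tensor factor equal to the class of an effective divisor $E_j\subset S$, hence is supported on $\bigcup_j S^{j-1}\times E_j\times S^{m-j}$, a divisor of $S^m$; so it suffices to show that $\wedge^m H^2_{\mathrm{tr}}(S,\QQ)$ is supported on a divisor, and (that Hodge structure being zero otherwise) I may assume $p_g(S)<m\le\dim H^2_{\mathrm{tr}}(S)$.

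Next I would pass to motives. Let $t_2(S)=(S,\pi_2^{\mathrm{tr}},0)$ be the transcendental part of the Chow motive $h(S)$, so that $(\pi_2^{\mathrm{tr}})_*\,CH^2(S)_\QQ=A^2_{AJ}(S)_\QQ$ and the cohomology realization of $t_2(S)$ is $H^2_{\mathrm{tr}}(S)$ placed in degree $2$. The exterior power $N:=\wedge^m t_2(S)=(S^m,p,0)$, carved out of $t_2(S)^{\otimes m}\subset h(S^m)$ by the Koszul idempotent $p$, is a direct summand of $h(S^m)$ whose cohomology sits in the single middle degree $2m=\dim S^m$, where $H^{2m}(N)=\wedge^m H^2_{\mathrm{tr}}(S)$ and the $(2m,0)$-component is $\wedge^m H^{2,0}(S)=0$ because $m>p_g(S)=\dim H^{2,0}(S)$. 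Thus $H^{2m}(N)$ already has Hodge-coniveau $\ge1$, and the Corollary is precisely the assertion that it has \emph{geometric} coniveau $\ge1$, i.e. is supported on a divisor of $S^m$.

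The heart of the matter is that Voisin's conjecture kills the $0$-cycles of $N$. Let $\Omega\supset\C$ be a universal domain. Every $0$-cycle on $(S_\Omega)^m$ is a $\QQ$-linear combination of products $[x_1]\times\cdots\times[x_m]$ of closed points, and one computes
\[ p_*\bigl([x_1]\times\cdots\times[x_m]\bigr)=\tfrac1{m!}\sum_{\sigma\in\Sy_m}\mathrm{sgn}(\sigma)\,b_{\sigma(1)}\times\cdots\times b_{\sigma(m)}\,,\qquad b_i:=(\pi_2^{\mathrm{tr}})_*[x_i]\in A^2_{AJ}(S_\Omega)_\QQ\,. \]
By the Theorem, Conjecture \ref{conj} holds for $S$ over $\C$; being an identity of algebraic cycles valid for all Abel--Jacobi trivial $0$-cycles, it propagates to $\Omega$ by the standard spreading-out (countable-union) argument. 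Hence the displayed sum vanishes, and therefore $p_*\,CH_0\bigl((S^m)_\Omega\bigr)_\QQ=0$; in other words the motive $N$ has trivial groups of $0$-cycles over every universal domain.

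Finally I would run a Bloch--Srinivas decomposition-of-the-diagonal argument (cf. \cite{Vo}). Restricting the cycle $p$ to the generic point of its first factor produces a $0$-cycle class which, after base change to $\Omega$, lies in $p_*\,CH_0\bigl((S^m)_\Omega\bigr)_\QQ=0$; by the localization sequence $p$ is therefore supported, in $CH_\QQ$, on $D\times S^m$ for a divisor $D\subsetneq S^m$. Choosing a resolution $\widetilde D\to D$ and lifting $p$ to a cycle $\gamma$ on $\widetilde D\times S^m$, the operator $p_*$ on $H^{2m}(S^m)$ factors through $\gamma_*\colon H^{2m}(\widetilde D)\to H^{2m}(S^m)$. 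Since $H^{2m}(\widetilde D)$ lies just above the middle degree of the $(2m-1)$-dimensional $\widetilde D$, the hard Lefschetz theorem gives $H^{2m}(\widetilde D)=h_{\widetilde D}\cup H^{2m-2}(\widetilde D)$ for the hyperplane class $h_{\widetilde D}$; intersecting $\gamma$ with a general hyperplane section of $\widetilde D$ times $S^m$ then shows that the image of $\gamma_*$ is supported on a proper (hence divisorial) subvariety of $S^m$. Combined with the first step, this proves that $\wedge^m H^2(S,\QQ)$ is supported on a divisor. The one genuinely delicate point is the preceding step: one must be sure that Voisin's relation, which only constrains the ``split'' cycles $b_1\times\cdots\times b_m$, really controls \emph{all} of the $0$-cycles of the summand $N$ over every relevant function field --- this works because on a self-product every $0$-cycle is split, but it requires the passage to $\Omega$ to be handled carefully. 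Everything else is formal (the first step), a matter of definitions (the motive $N$), or the classical Bloch--Srinivas technology; finite-dimensionality of $h(S)$ is not logically needed here, but it is exactly what makes this instance of the generalized Hodge conjecture \emph{expected}, in the spirit of \cite{BLP}.
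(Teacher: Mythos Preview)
Your proof is correct and follows the same core strategy as the paper: Voisin's conjecture forces the vanishing of $0$--cycles on the antisymmetrised motive, and then a Bloch--Srinivas decomposition of the projector yields the coniveau statement. The paper's argument is terser in two respects: it works with $\wedge^m h^2(S)$ rather than $\wedge^m t_2(S)$ (so your preliminary splitting off of the N\'eron--Severi part is not needed), and it obtains the support of the projector directly on $S^m\times D$ --- implicitly using that the antisymmetriser composed with the $\pi^2_S$ is self--transpose --- which immediately shows $\Gamma_\ast H^{2m}(S^m)$ is supported on $D$ and bypasses your hard Lefschetz manoeuvre on $\widetilde D$.
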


  The surfaces considered in this note have an interesting feature in common (which makes it easy to prove Conjecture \ref{conj} for them): for many of them, intersection product induces a surjection
    \[ A^1_{hom}(S)\otimes A^1_{hom}(S)\ \twoheadrightarrow\ A^2_{AJ}(S)\ .\]
    In the other cases (cases (\rom2), (\rom4), (\rom6)--(\rom8), which have $q(S)=0$), the surface $S$ is dominated by a surface $T$ with the property that the intersection product map
    \[ A^1_{hom}(T)\otimes A^1_{hom}(T)\ \to\ A^2_{AJ}(T)\ \]  
    surjects onto $\ima \bigl( A^2_{AJ}(S)\to A^2_{AJ}(T)\bigr)$. 
    
    Using this feature, to prove Conjecture \ref{conj} for these surfaces one is reduced to a problem concerning $0$--cycles on abelian varieties. This last problem has recently been solved by Vial \cite{Ch}, using a strong version of the generalized Hodge conjecture for generic abelian varieties.

   \vskip0.6cm

\begin{convention} In this note, the word {\sl variety\/} will refer to a reduced irreducible scheme of finite type over $\C$. A {\sl subvariety\/} is a (possibly reducible) reduced subscheme which is equidimensional. 

{\bf Unless indicated otherwise, all Chow groups will be with rational coefficients}: we will denote by $A_j(X)$ the Chow group of $j$--dimensional cycles on $X$ with $\QQ$--coefficients (and by $A_j(X)_{\ZZ}$ the Chow groups with $\ZZ$--coefficients); for $X$ smooth of dimension $n$ the notations $A_j(X)$ and $A^{n-j}(X)$ are used interchangeably. 

The notations $A^j_{hom}(X)$, $A^j_{AJ}(X)$ will be used to indicate the subgroups of homologically trivial, resp. Abel--Jacobi trivial cycles.
The contravariant category of Chow motives (i.e., pure motives with respect to rational equivalence as in \cite{Sc}, \cite{MNP}) will be denoted $\MM_{\rm rat}$.



We will write $H^j(X)$ 
to indicate singular cohomology $H^j(X,\QQ)$.
\end{convention}

  \section{Generalized Burniat type surfaces with $p_g=3$}
  
  \begin{definition}[\cite{BCF}]\label{gbt} Let $A=E_1\times E_2\times E_3$ be a product of elliptic curves. A {\em generalized Burniat type surface\/} (or ``GBT surface'')
  is a quotient $S=Y/G$, where $Y\subset A$ is a smooth hypersurface corresponding to the square of a principal polarization, and $G\cong \ZZ_2^3$ acts freely.
   \end{definition}
   
   \begin{remark} GBT surfaces are minimal surfaces of general type with $p_g(S)=q(S)$ ranging from $0$ to $3$. There are $16$ irreducible families of GBT surfaces, labelled $\Ss_1,\ldots \Ss_{16}$ in \cite{BCF}. The families $\Ss_1, \Ss_2$ have moduli--dimension $4$, the other families are $3$--dimensional.
    \end{remark}

  \begin{theorem}[Peters \cite{Chris}]\label{Gbt} Let $S$ be a GBT surface with $p_g(S)=3$ (i.e., $S$ is in the family labelled $\Ss_{16}$ in \cite{BCF}), and let 
  $A$ be the abelian threefold as in definition \ref{gbt}.
  
  \noindent
  (\rom1)
   $S$ has finite--dimensional motive, and there are natural isomorphisms
     \[ A^2_{(2)}(A)\ \xrightarrow{\cong}\ A^2_{AJ}(S)\ \xrightarrow{\cong}\ A^3_{(2)}(A)\ .\]
   (Here $A^\ast_{(\ast)}(A)$ refers to Beauville's decomposition \cite{Beau}.)  
   
  \noindent
  (\rom2) Intersection product induces a surjection
    \[ A^1_{hom}(S)\otimes A^1_{hom}(S)\ \twoheadrightarrow\ A^2_{AJ}(S)\ .\]   
       \end{theorem}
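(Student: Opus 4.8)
The plan is to reduce both statements to known facts about the abelian threefold $A = E_1 \times E_2 \times E_3$ via the finite group quotient $S = Y/G$, where $Y \subset A$ is the smooth hypersurface of Definition \ref{gbt}. First I would record that, since $G \cong \ZZ_2^3$ acts freely on $Y$, there is an isomorphism of Chow motives $h(S) \cong h(Y)^G$, the $G$-invariant part; in particular $A^i(S) = A^i(Y)^G$ for all $i$. Next I would invoke the Lefschetz-type hyperplane results for the smooth ample divisor $Y \subset A$: the Gysin pushforward $A^1_{hom}(Y) \to A^2_{hom}(A)$ and the restriction $A^i(A) \to A^i(Y)$ control the Chow groups of $Y$ in low codimension, and $Y$ inherits finite-dimensionality from $A$ because abelian varieties have finite-dimensional motive (Kimura, Shermenev) and finite-dimensionality passes to direct summands of motives of smooth projective varieties — and $h(Y)$ is closely tied to $h(A)$ through the hyperplane section exact sequences. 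Combining this with $h(S) \cong h(Y)^G$ gives part (\rom1)'s finite-dimensionality claim.

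For the isomorphisms in (\rom1), I would work inside Beauville's decomposition $A^i(A) = \bigoplus_s A^i_{(s)}(A)$ and its $G$-equivariant refinement. Because $S$ has $p_g(S) = q(S) = 3$, one has $H^{2,0}(S) \cong H^{2,0}(A)$ and $H^{1,0}(S) \cong H^{1,0}(A)$ compatibly with the $G$-action, so the transcendental motive of $S$ matches the relevant graded pieces of $h(A)$. Concretely, the restriction map identifies $A^2_{AJ}(S) = A^2_{hom}(Y)^G$ with the appropriate $G$-invariant summand, and the Gysin/intersection-with-$Y$ maps $A^2_{(2)}(A) \to A^2_{hom}(Y) \to A^3_{(2)}(A)$ (the second being intersection with the class of $Y$, which on $A^2_{(2)}$ is a multiple of a principal polarization squared) become isomorphisms after passing to $G$-invariants, using that $G$ acts on the relevant Beauville pieces by a character whose invariants are everything in sight. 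Here I would lean on Peters's computation \cite{Chris} as the source of the precise matching of Hodge pieces and $G$-representations; this bookkeeping — tracking exactly which Beauville summands survive $G$-invariants and verifying the composite is an isomorphism rather than just a surjection with finite kernel — is the step I expect to be the main obstacle, since it requires the explicit structure of the $\Ss_{16}$ family rather than soft motivic arguments.

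For part (\rom2), the strategy is to produce the surjection $A^1_{hom}(S) \otimes A^1_{hom}(S) \to A^2_{AJ}(S)$ by comparison with the analogous intersection product on $A$. On the abelian threefold, intersection of divisors gives $A^1_{hom}(A) \otimes A^1_{hom}(A) \to A^2_{hom}(A)$, and because $A$ is a product of elliptic curves, $A^1_{hom}(A)$ is built from the $A^1_{hom}(E_i) = A^1_{AJ}(E_i)$ together with the "off-diagonal" divisor classes, and one checks the intersection product hits all of $A^2_{(2)}(A)$ — this is where the product structure $E_1 \times E_2 \times E_3$ is essential, as it makes $A^2_{(2)}(A)$ generated by products $A^1_{AJ}(E_i) \times A^1_{AJ}(E_j) \times [\text{pt}]$. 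Then I would transport this via the correspondences between $S$ and $A$ (pull-back along $Y \hookrightarrow A$ followed by the $G$-averaging projector $h(Y) \to h(S)$) and use part (\rom1) to identify the target: the image of $A^1_{hom}(A)^{\otimes 2} \to A^2_{hom}(A)$ maps onto $A^2_{(2)}(A) \xrightarrow{\cong} A^2_{AJ}(S)$, while divisors on $A$ restrict to divisors on $Y$ and descend (after averaging) to $A^1_{hom}(S)$, and restriction is compatible with intersection products. Chasing this diagram — and checking the $G$-invariants are compatible on both the divisor side and the $0$-cycle side — yields the surjection in (\rom2).
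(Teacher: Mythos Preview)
Your overall strategy matches the paper's: part (\rom1) is attributed to Peters \cite{Chris}, and part (\rom2) is deduced from (\rom1) together with the surjectivity of
\[ A^1_{hom}(A)\otimes A^1_{hom}(A)\ \twoheadrightarrow\ A^2_{(2)}(A) \]
on the abelian threefold, transported to $S$ via the natural isomorphism of (\rom1). So there is no genuine gap.

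That said, you take an unnecessary detour in (\rom2). You argue that the surjection above holds because $A=E_1\times E_2\times E_3$ is a product of elliptic curves, writing $A^2_{(2)}(A)$ explicitly in K\"unneth pieces and calling the product structure ``essential''. The paper instead simply invokes \cite[Proposition~4]{Beau}, which gives this surjection for \emph{any} abelian variety (via the Fourier transform, it is equivalent to the statement that Pontryagin products of $0$--cycles of the form $[a]-[0]$ generate $A_0^{(s)}$ for all $s$). So the product--of--elliptic--curves hypothesis is not needed here, and your K\"unneth computation is avoidable. Incidentally, in that computation your generators ``$A^1_{AJ}(E_i)\times A^1_{AJ}(E_j)\times[\hbox{pt}]$'' lie in $A^3(A)$, not $A^2(A)$; you presumably meant the fundamental class $[E_k]$ rather than $[\hbox{pt}]$.

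For part (\rom1), the paper gives no argument beyond the citation \cite[Theorem~4.2]{Chris}. Your sketch of how such a proof might go (via $h(S)\cong h(Y)^G$, Lefschetz for the ample divisor $Y\subset A$, and tracking $G$--invariants through the Beauville pieces) is reasonable in outline, but the phrase ``$h(Y)$ is closely tied to $h(A)$ through the hyperplane section exact sequences'' does not by itself yield finite--dimensionality of $h(Y)$: a smooth ample surface in an abelian threefold is not automatically Kimura--finite, and the actual argument in \cite{Chris} uses the specific structure of the family $\Ss_{16}$. Since you already acknowledge leaning on Peters for this step, this is not a defect in your proposal so much as a point where the real work is being outsourced.
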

  
  \begin{proof} Part (\rom1) is \cite[Theorem 4.2]{Chris}.
  
  Part (\rom2) follows from (\rom1), in view of the fact that intersection product induces a surjection
  \[ A^1_{hom}(A)\otimes A^1_{hom}(A)\ \twoheadrightarrow\ A^2_{(2)}(A) \ \]
  \cite[Proposition 4]{Beau}.
    \end{proof}

  Property (\rom2) of Theorem \ref{Gbt} is relevant to Conjecture \ref{conj}:
  
  \begin{proposition}\label{handy0} Let $S$ be a smooth projective surface, and assume that intersection product induces a surjection
     \[ A^1_{hom}(S)\otimes A^1_{hom}(S)\ \twoheadrightarrow\ A^2_{AJ}(S)\ .\]
    Then $S$ has finite--dimensional motive.
    
Also, Conjecture \ref{conj} is true for $S$ with $m>{q(S)\choose 2}$.
 (In particular, in case of equality $p_g(S)= {q(S)\choose 2}$ the full Conjecture \ref{conj} is true for $S$.)       
  \end{proposition}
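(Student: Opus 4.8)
The plan is to use the hypothesis to transfer the whole question to the abelian variety $A:=\alb(S)$, and then to quote Kimura's theory on the motivic side and Vial's theorem \cite{Ch} on the side of zero--cycles. The basic reduction is this: since $A$ is dual to $\Pic^0(S)$, the Albanese morphism $f\colon S\to A$ induces an isomorphism $f^\ast\colon A^1_{hom}(A)\xrightarrow{\cong}A^1_{hom}(S)$, and $A^1_{hom}(A)=A^1_{(1)}(A)$ in Beauville's notation. The intersection product $A^1_{hom}(S)\otimes A^1_{hom}(S)\to A^2(S)$ has image inside $A^2_{AJ}(S)$ (a product of two homologically trivial divisors has trivial Albanese image), and it is $f^\ast$ applied to the intersection product $A^1_{(1)}(A)\otimes A^1_{(1)}(A)\to A^2_{(2)}(A)$; the latter is surjective by \cite[Proposition 4]{Beau}, so the hypothesis of the Proposition yields a surjection $f^\ast\colon A^2_{(2)}(A)\twoheadrightarrow A^2_{AJ}(S)$.

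For finite--dimensionality, I would take a Chow--K\"unneth decomposition of $h(S)$ with $h^2(S)=h^2_{alg}(S)\oplus t_2(S)$, where $h^2_{alg}(S)$ is a sum of Lefschetz motives and $t_2(S)$ is the transcendental part, so $A^2(t_2(S))=A^2_{AJ}(S)$ and $H^\ast(t_2(S))=H^2_{tr}(S)$ (transcendental cohomology). Since $h^0(S),h^4(S)$ are Lefschetz motives and $h^1(S)\cong h^1(A)$, $h^3(S)\cong h^1(A)(-1)$ are finite--dimensional ($A$ being an abelian variety \cite{Kim}), it remains to prove $t_2(S)$ finite--dimensional. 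The multiplication correspondence of $S$ (its small diagonal), composed with the projector onto $t_2(S)$, gives a morphism $\varphi\colon h^1(S)\otimes h^1(S)\to t_2(S)$ in $\MM_{\rm rat}$ whose action on $A^2(-)$ is surjective (its image contains that of the intersection product $A^1_{hom}(S)\otimes A^1_{hom}(S)\to A^2_{AJ}(S)$, which is onto by hypothesis). The next --- and least formal --- step is to show $\varphi$ is also surjective on cohomology, equivalently $H^2_{tr}(S)\subseteq\wedge^2H^1(S)$ inside $H^2(S)$: here one spreads out the family of products $f^\ast(\ell_1)\cdot f^\ast(\ell_2)$ of homologically trivial divisors over $\widehat A\times\widehat A$, obtaining a correspondence through which $A^2_{AJ}(S)$ is dominated by the zero--cycles on $\widehat A\times\widehat A$, and a Bloch--Srinivas type argument then forces the weight--$2$ Hodge structure $H^2_{tr}(S)$ into $\wedge^2H^1(S)$. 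Granting this, $\varphi$ admits a section modulo homological equivalence, and Kimura's nilpotence theorem \cite{Kim} (applicable since $h^1(S)^{\otimes 2}$ is finite--dimensional) lifts the resulting idempotent to $\MM_{\rm rat}$; hence $t_2(S)$ is a direct summand of $h^1(S)^{\otimes 2}$, so finite--dimensional, and $h(S)$ is finite--dimensional.

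For Conjecture \ref{conj}, put $q:=q(S)$ and let $a_1,\dots,a_m\in A^2_{AJ}(S)$ with $m>\binom q2$. By the reduction above each $a_i=f^\ast(b_i)$ with $b_i\in A^2_{(2)}(A)$, and since $a_{i_1}\times\cdots\times a_{i_m}=(f^{\times m})^\ast(b_{i_1}\times\cdots\times b_{i_m})$ in $A^{2m}(S^m)$, one gets
\[ \sum_{\sigma\in\Sy_m}\hbox{sgn}(\sigma)\,a_{\sigma(1)}\times\cdots\times a_{\sigma(m)}\ =\ (f^{\times m})^\ast\Bigl(\sum_{\sigma\in\Sy_m}\hbox{sgn}(\sigma)\,b_{\sigma(1)}\times\cdots\times b_{\sigma(m)}\Bigr) \]
in $A^{2m}(S^m)$, so it suffices to prove the bracketed class vanishes in $A^{2m}(A^m)$. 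Applying Beauville's Fourier transform --- an isomorphism on Chow groups, carrying $A^2_{(2)}(A)$ onto $A^q_{(2)}(\widehat A)$ and multiplicative for products up to sign --- reduces this to the vanishing of $\sum_\sigma\hbox{sgn}(\sigma)\,\widehat b_{\sigma(1)}\times\cdots\times\widehat b_{\sigma(m)}$ for the zero--cycles $\widehat b_i\in A^q_{(2)}(\widehat A)$ on the $q$--dimensional abelian variety $\widehat A$. For $m>\binom q2\,\bigl(=h^0(\widehat A,\Omega^2_{\widehat A})\bigr)$ this is exactly Vial's theorem \cite{Ch}, whose proof rests on a strong form of the generalized Hodge conjecture for generic abelian varieties. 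If $p_g(S)=\binom{q(S)}2$, then the range $m>\binom{q(S)}2$ coincides with $m>p_g(S)$, so the full Conjecture \ref{conj} holds for $S$.

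I expect the main obstacle to be the cohomological input in the finite--dimensionality argument, i.e.\ deducing $H^2_{tr}(S)\subseteq\wedge^2H^1(S)$ from the purely Chow--theoretic hypothesis: this requires spreading out the cycles and is not a formal consequence of working inside $\MM_{\rm rat}$. Once it is in place, Kimura's nilpotence theorem concludes the first assertion, while the second assertion is essentially formal modulo Vial's theorem \cite{Ch}.
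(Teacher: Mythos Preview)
Your proposal is correct and follows the same overall strategy as the paper: reduce everything to the Albanese variety via the surjection $\alpha^\ast\colon A^2_{(2)}(A)\twoheadrightarrow A^2_{AJ}(S)$, deduce finite--dimensionality from known results on motives of abelian type, and then apply Vial's theorem \cite{Ch} on $0$--cycles of abelian varieties. Two points of comparison are worth noting.

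\emph{Finite--dimensionality.} The paper does not unpack the argument: once the surjectivity of $\alpha^\ast$ on $A^2_{(2)}(A)$ is established, it simply invokes \cite[Theorem 3.11]{V3} (with \cite[Theorem B.7]{LNP} and \cite[Proposition 2.1]{Diaz} as alternatives). Your sketch---building $\varphi\colon h^1(S)^{\otimes 2}\to t_2(S)$, spreading out to get the cohomological surjectivity, and lifting via Kimura nilpotence---is essentially a reconstruction of those cited proofs. What you identify as the ``main obstacle'' (the Bloch--Srinivas step forcing $H^2_{tr}(S)\subset\wedge^2 H^1(S)$) is precisely the content hidden behind the citation, so there is no gap, only a difference in packaging.

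\emph{Reduction to $0$--cycles.} Here there is a genuine, if minor, divergence. To pass from the antisymmetrized product of the $b_j\in A^2_{(2)}(A)$ to an analogous product of $0$--cycles, the paper stays on $A$ and uses K\"unnemann's hard Lefschetz \cite{Kun}: intersecting with a large power of an $\Sy_m$--invariant symmetric ample class $H$ gives an isomorphism $A^{2m}_{(2m)}(A^m)\to A^{qm}_{(2m)}(A^m)$, so the vanishing of the $b$--expression is equivalent to that of the corresponding $c$--expression with $c_j=b_j\cdot h^{q-2}\in A^q_{(2)}(A)$. You instead apply the Beauville--Fourier transform componentwise to land in $A^q_{(2)}(\widehat A)$. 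Both routes are valid and both feed directly into \cite[Theorem 4.1]{Ch} with $k=2$, $g=q$; the Fourier argument is slightly slicker (no need to check the combinatorics of $H^{m(q-2)}$ acting on exterior products), while the hard Lefschetz argument has the mild advantage of keeping everything on $A$ itself.
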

  
  \begin{proof} Let $\alpha\colon S\to A:=\alb(S)$ be the Albanese map. There is a commutative diagram
    \[ \begin{array}[c]{ccc}   
          A^1_{hom}(S)\otimes A^1_{hom}(S) &\to& A^2_{AJ}(S)\\
          &&\\
        \ \ \ \   \uparrow{\scriptstyle (\alpha^\ast,\alpha^\ast)}&&   \ \ \ \   \uparrow{\scriptstyle \alpha^\ast}\\
        &&\\
          A^1_{hom}(A)\otimes A^1_{hom}(A) &\to& A^2_{(2)}(A)\\  
         \end{array} \]
       (where horizontal maps are induced by intersection product, and $A^\ast_{(\ast)}(A)$ refers to the Beauville decomposition \cite{Beau} of the Chow ring of any abelian variety). As the left vertical map is an isomorphism, the assumption implies that the right vertical map is surjective. In view of \cite[Theorem 3.11]{V3}, this implies $S$ has finite--dimensional motive. (For an alternative proof of \cite[Theorem 3.11]{V3} in terms of birational motives, cf. \cite[Theorem B.7]{LNP}. For a similar result, cf. \cite[Proposition 2.1]{Diaz}.)  
       
    Next, let us consider Conjecture \ref{conj} for $S$. Thanks to Rojtman's result \cite{Ro}, it suffices to establish Conjecture \ref{conj} for $0$--cycles with $\QQ$--coefficients.
    Because $\alpha^\ast\colon A^2_{(2)}(A)\to A^2_{AJ}(S)$ is surjective, to prove Conjecture \ref{conj} for $S$ it suffices to prove (a version of) Conjecture \ref{conj} for elements $b_1,\ldots,b_m\in  A^2_{(2)}(A)$. We now reduce to $0$--cycles on $A$: given $b_j\in A^2_{(2)}(A)$, let
    \[ c_j:= b_j\cdot h^{q-2}\ \ \in\ A^q_{(2)}(A)\ ,\ \ \ j=1,\ldots,m\ ,\]
    be $0$--cycles, where $q:=q(S)$ is the dimension of $A$ and $h\in A^1(A)$ is a symmetric ample divisor.  
    Let us consider the $\Sy_m$--invariant ample divisor
     \[ H:= \sum_{j=1}^m (pr_j)^\ast(h)\ \ \ \in\ A^1(A^m)\ .\]
     From K\"unnemann's hard Lefschetz result \cite{Kun}, we know that the map
     \[ \cdot H^{m(q-2)}\colon\ \ A^{2m}_{(2m)}(A^m)\ \to\ A^{qm}_{(2m)}(A^m) \]
     is an isomorphism. On the other hand,
     \[ \begin{split}
     c_{\sigma(1)}\times\cdots\times c_{\sigma(m)}&= \bigl(b_{\sigma(1)}\times\cdots\times b_{\sigma(m)} \bigr)\cdot \bigl( h^{q-2}\times\cdots\times h^{q-2}\bigr)\\
                                                                              &=    \bigl(b_{\sigma(1)}\times\cdots\times b_{\sigma(m)} \bigr)\cdot H^{m(q-2)}\ \ \ \hbox{in}\  A^{qm}_{(2m)}(A^m)\\
                                                                         \end{split}\]
                                                (since intersecting $A^2(A)$ with a power $h^r, r>q-2$ gives $0$).
                                                
          We are thus reduced to proving that for any $c_1,\ldots,c_m\in A^q_{(2)}(A)$, where $m>{q\choose 2}$, there is equality
          \[ \sum_{\sigma\in\Sy_m} \hbox{sgn}(\sigma) \,  c_{\sigma(1)}\times\cdots\times c_{\sigma(m)}=0\ \ \ \hbox{in}\ A^{gm}(A^m)_{}\ .\]                                                                
       At this point, we can invoke the following general result on $0$--cycles on abelian varieties to conclude:
    
    \begin{theorem}[Vial \cite{Ch}] Let $A$ be an abelian variety of dimension $g$, and let $c_1,\ldots,c_m\in A^g_{(k)}(A)$.
    
    If $k$ is even and $m>{g\choose k}$, there is vanishing
      \[  \sum_{\sigma\in\Sy_m} \hbox{sgn}(\sigma) \, c_{\sigma(1)}\times\cdots\times c_{\sigma(m)}=0\ \ \ \hbox{in}\ A^{mg}(A^m)_{}\ .\]   
      
 If $k$ is odd and $m>{g\choose k}$, there is vanishing
      \[  \sum_{\sigma\in\Sy_m}  c_{\sigma(1)}\times\cdots\times c_{\sigma(m)}=0\ \ \ \hbox{in}\ A^{mg}(A^m)_{}\ .\]   
      \end{theorem}
      
     \begin{proof} This is \cite[Theorem 4.1]{Ch}, whose proof uses the concept of ``generically defined cycles on abelian varieties'', and a strong form of the generalized Hodge conjecture for powers of generic abelian varieties, due to Hazama \cite[Theorem 2.12]{Ch}. The case $k=g$ was proven earlier (and differently) in 
     \cite[Example 4.40]{Vo}.
     \end{proof} 
    
   This ends the proof of Proposition \ref{handy0}.                             
  \end{proof}

  We can now prove that surfaces in the family $\Ss_{16}$ verify Voisin's conjecture:
  
  \begin{corollary}\label{main1} Let $S$ be a GBT surface with $p_g(S)=3$ (i.e., $S$ is in the family labelled $\Ss_{16}$ in \cite{BCF}).  
  Then $S$ verifies Conjecture \ref{conj}: for any $m>3$ and $a_1,\ldots,a_m\in A^2_{AJ}(S)$, there is equality
    \[  \sum_{\sigma\in\Sy_m} \hbox{sgn}(\sigma) a_{\sigma(1)}\times\cdots\times a_{\sigma(m)}=0\ \ \ \hbox{in}\ A^{2m}(S^m)\ .\]  
    \end{corollary}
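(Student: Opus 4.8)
The plan is to read this off directly from the two results just established, so the proof will be short. First I would invoke Theorem \ref{Gbt}(\rom2): for a GBT surface $S$ in the family $\Ss_{16}$, intersection product induces a surjection $A^1_{hom}(S)\otimes A^1_{hom}(S)\twoheadrightarrow A^2_{AJ}(S)$. This places $S$ squarely in the hypothesis of Proposition \ref{handy0}, which therefore already gives that $S$ has finite--dimensional motive, and that Conjecture \ref{conj} holds for $S$ for all $m>{q(S)\choose 2}$.

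Next I would exploit the numerical coincidence special to $\Ss_{16}$: by the remark following Definition \ref{gbt} these surfaces satisfy $p_g(S)=q(S)=3$, hence ${q(S)\choose 2}={3\choose 2}=3=p_g(S)$. We are thus precisely in the equality case $p_g(S)={q(S)\choose 2}$ of Proposition \ref{handy0}, so the \emph{full} Conjecture \ref{conj} holds for $S$: for every $m>p_g(S)=3$ and all $a_1,\ldots,a_m\in A^2_{AJ}(S)$ the alternating sum $\sum_{\sigma\in\Sy_m}\hbox{sgn}(\sigma)\,a_{\sigma(1)}\times\cdots\times a_{\sigma(m)}$ vanishes in $A^{2m}(S^m)$.

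Since both ingredients are in hand, there is no real obstacle at this stage. If I wanted the argument to be self--contained I would simply note explicitly that the condition ``$m>{q(S)\choose 2}$'' from Proposition \ref{handy0} coincides here with ``$m>p_g(S)$''. The genuine content sits upstream: in Peters' computation (Theorem \ref{Gbt}, which identifies $A^2_{AJ}(S)$ with a graded piece of the Beauville decomposition of the abelian threefold $A$ of Definition \ref{gbt}), and in the proof of Proposition \ref{handy0}, where the Albanese map reduces matters to $0$--cycles on $A$ and one appeals to K\"unnemann's hard Lefschetz theorem together with Vial's theorem on zero--cycles on abelian varieties. The present corollary is then only the final bookkeeping step.
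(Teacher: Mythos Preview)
Your argument is correct and matches the paper's own proof exactly: invoke Theorem \ref{Gbt}(\rom2) to place $S$ in the hypothesis of Proposition \ref{handy0}, then use $p_g(S)=q(S)=3$ so that ${q(S)\choose 2}=3=p_g(S)$ and the full Conjecture \ref{conj} follows. The paper compresses this into a single sentence, but the content is identical.
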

    
  \begin{proof} This follows from Proposition \ref{handy0}, in view of Theorem \ref{Gbt} plus the fact that $q(S)=p_g(S)=3$.
    \end{proof}

 We recall that the truth of Conjecture \ref{conj} implies a certain instance of the generalized Hodge conjecture:
 
 \begin{corollary}\label{ghc} Let $S$ be a surface verifying Conjecture \ref{conj}, and let $m>p_g(S)$. Then the sub--Hodge structure
    \[ \wedge^m H^2(S,\QQ)\ \subset\ H^{2m}(S^m,\QQ) \]
   is supported on a divisor.
   \end{corollary}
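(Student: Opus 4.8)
The plan is to realize the sub--Hodge structure $\wedge^m H^2(S)$ as the image of an explicit self-correspondence $\Pi$ of $S^m$ which annihilates $0$--cycles, and then to run the Bloch--Srinivas ``decomposition of the diagonal'' for $\Pi$. (As a reality check, $\wedge^m H^2(S)$ has Hodge coniveau $\geq m-p_g(S)\geq1$, so the statement is a genuine instance of the generalized Hodge conjecture.) Fix a Chow--K\"unneth decomposition $\Delta_S=\sum_{i=0}^4\pi^i$ of the surface $S$ (Murre). Then $\pi^2\in A^2(S\times S)$ is an idempotent correspondence acting on $H^\ast(S)$ as the projector onto $H^2(S)$, and --- a standard feature of Murre's decomposition for surfaces --- $(\pi^2)_\ast$ carries $A_0(S)$ into $A^2_{AJ}(S)$ and restricts to the identity on $A^2_{AJ}(S)$ (see \cite{Mur}, \cite{MNP}). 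For $\sigma\in\Sy_m$ let $\Gamma_\sigma\subset S^m\times S^m$ be the graph of the corresponding permutation automorphism of $S^m$, and set
\[ \Pi\ :=\ \Bigl(\frac{1}{m!}\sum_{\sigma\in\Sy_m}\hbox{sgn}(\sigma)\,\Gamma_\sigma\Bigr)\ \circ\ \bigl(\pi^2\otimes\cdots\otimes\pi^2\bigr)\ \ \in\ A^{2m}(S^m\times S^m)\ ,\]
where $\pi^2\otimes\cdots\otimes\pi^2$ is the $m$-fold exterior product, viewed as a self-correspondence of $S^m$. As degree--$2$ cohomology classes commute, $\sigma$ acts on the K\"unneth summand $H^2(S)^{\otimes m}\subset H^{2m}(S^m)$ by honestly permuting the factors (no Koszul sign); since $(\pi^2\otimes\cdots\otimes\pi^2)_\ast$ projects $H^{2m}(S^m)$ onto $H^2(S)^{\otimes m}$ and the antisymmetrizer then extracts the alternating part, one gets
\[ \ima\bigl(\Pi_\ast\colon H^{2m}(S^m)\to H^{2m}(S^m)\bigr)\ =\ \wedge^m H^2(S)\ . \]

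Next, $\Pi_\ast=0$ on $A_0(S^m)=A^{2m}(S^m)$. Indeed this group is generated by external products $[x_1]\times\cdots\times[x_m]$ with $x_i\in S$, and, setting $a_i:=(\pi^2)_\ast[x_i]\in A^2_{AJ}(S)$,
\[ \Pi_\ast\bigl([x_1]\times\cdots\times[x_m]\bigr)\ =\ \frac{1}{m!}\sum_{\sigma\in\Sy_m}\hbox{sgn}(\sigma)\,a_{\sigma(1)}\times\cdots\times a_{\sigma(m)}\ \ \ \hbox{in}\ A^{2m}(S^m)\ . \]
Since $m>p_g(S)$ and $S$ verifies Conjecture \ref{conj}, the right-hand side vanishes.

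Finally, invoke the Bloch--Srinivas argument (cf. \cite{Vo}): a correspondence $\Gamma\in A^{\dim X}(X\times X)$ acting as $0$ on $A_0(X_\Omega)$ for a universal domain $\Omega$ is rationally equivalent to a cycle supported on $X\times D$ for some divisor $D\subset X$, and therefore $\Gamma_\ast H^k(X)$ is supported on $D$ for every $k$. Applied to $X=S^m$ and $\Gamma=\Pi$, this shows that $\wedge^m H^2(S)$ is supported on a divisor of $S^m$, as claimed. The one point requiring care --- and the only real obstacle --- is that Bloch--Srinivas needs $\Pi_\ast=0$ on $A_0$ over a universal domain, whereas the hypothesis provides it only over $\C$. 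This is remedied by a routine spreading-out argument: a $0$--cycle over $\Omega$ is defined over some finitely generated extension $\C(B)$ of $\C$, spreads to a relative cycle over a dense open $U\subset B$, and --- since Albanese-triviality is preserved under specialization to the $\C$-points of $U$, and rational equivalence is a countable union of closed conditions --- fibrewise triviality over $U(\C)$ (which Conjecture \ref{conj} for $S_\C$ provides) forces triviality at the generic point of $U$; hence Conjecture \ref{conj} for $S_\C$ propagates to all overfields. The whole implication ``Conjecture \ref{conj} $\Rightarrow$ this instance of the generalized Hodge conjecture'' is due to Voisin \cite{V9}.
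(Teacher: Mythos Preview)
Your argument is correct and is essentially identical to the paper's own proof: both build the self--correspondence $\Pi=\bigl(\sum_\sigma\hbox{sgn}(\sigma)\Gamma_\sigma\bigr)\circ(\pi^2\times\cdots\times\pi^2)$ on $S^m$, observe that Conjecture~\ref{conj} forces $\Pi_\ast A_0(S^m)=0$, and then invoke Bloch--Srinivas to conclude that $\wedge^m H^2(S)=\Pi_\ast H^{2m}(S^m)$ is supported on a divisor. Your extra paragraph on passing to a universal domain is harmless but unnecessary here, since $\C$ already is one.
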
 
  
\begin{proof} This is already observed in \cite{V9}.  Consider the Chow motive $\wedge^m h^2(S)$ defined by the idempotent
  \[  \Gamma:=  \bigl(\sum_{\sigma\in\Sy_m} \hbox{sgn}(\sigma) \Gamma_\sigma\bigr)\circ \bigl(\pi^2_S\times\cdots\times \pi^2_S\bigr)\ \ \ \in\ A^{2m}(S^m\times S^m)\ .\]
  Conjecture \ref{conj} is equivalent to saying that $A_0(\wedge^m h^2(S))=0$.
  
  Applying the Bloch--Srinivas argument \cite{BS} to $\Gamma$, one obtains a rational equivalence
  \[ \Gamma=\gamma\ \ \ \hbox{in}\ A^{2m}(S^m\times S^m)\ ,\]
  where $\gamma$ is a cycle supported on $S^m\times D$ for some divisor $D\subset S^m$.
  On the other hand, $\Gamma$ acts on $H^{2m}(S^m,\QQ)$ as projector on $\wedge^m H^2(S,\QQ)$. It follows that $ \wedge^m H^2(S,\QQ)$ is supported on $D$.  
  \end{proof}

 \section{A criterion}

 The approach of the last section can be conveniently rephrased as follows:
 
 \begin{proposition}\label{handy} Let $S$ be a smooth projective surface. Assume that $S$ has finite--dimensional motive, and that cup product induces an isomorphism
   \[  C\colon\ \    \wedge^2 H^1(S,\OO_S) \ \xrightarrow{\cong}\ H^2(S,\OO_S)\ .\]
   Then Conjecture \ref{conj} is true for $S$.
   \end{proposition}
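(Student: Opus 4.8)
The plan is to reduce Proposition \ref{handy} to Proposition \ref{handy0}, so the real work is to show that the hypotheses here imply the surjectivity hypothesis of Proposition \ref{handy0}, namely that intersection product induces a surjection $A^1_{hom}(S)\otimes A^1_{hom}(S)\twoheadrightarrow A^2_{AJ}(S)$. Once that is established, Proposition \ref{handy0} gives finite--dimensionality (already assumed here) and, more importantly, Conjecture \ref{conj} for $m>\binom{q(S)}{2}$; it then remains to observe that the cup product isomorphism $C$ forces $p_g(S)=\binom{q(S)}{2}$, so that $m>p_g(S)$ is exactly the range covered.

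\textbf{Step 1: translate the Hodge-theoretic hypothesis.} Since $H^1(S,\OO_S)\cong H^{0,1}(S)$ has dimension $q(S)$ and $H^2(S,\OO_S)\cong H^{0,2}(S)$ has dimension $p_g(S)$, the isomorphism $C\colon \wedge^2 H^1(S,\OO_S)\xrightarrow{\cong} H^2(S,\OO_S)$ gives $p_g(S)=\binom{q(S)}{2}$. By Hodge symmetry (conjugation), the dual statement is that cup product $\wedge^2 H^0(S,\Omega^1_S)\to H^0(S,\Omega^2_S)$ is also an isomorphism; equivalently, writing $\alpha\colon S\to A:=\alb(S)$ for the Albanese map, the pullback $\alpha^\ast\colon H^0(A,\Omega^2_A)\to H^0(S,\Omega^2_S)$ is an isomorphism (indeed $H^0(A,\Omega^2_A)=\wedge^2 H^0(A,\Omega^1_A)=\wedge^2 H^0(S,\Omega^1_S)$ via $\alpha^\ast$ on $1$-forms, which is always an isomorphism). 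So the hypothesis is precisely that the transcendental part of $H^2(S)$ ``comes from'' $A$ through the Albanese map, in the sense that $\alpha^\ast$ is surjective on $(2,0)+(0,2)$.

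\textbf{Step 2: lift to Chow groups using finite--dimensionality.} This is the heart of the argument, and the step I expect to be the main obstacle. The commutative square in the proof of Proposition \ref{handy0} shows it suffices to prove that $\alpha^\ast\colon A^2_{(2)}(A)\to A^2_{AJ}(S)$ is surjective, since the intersection product map on $A$ surjects onto $A^2_{(2)}(A)$ by \cite[Proposition 4]{Beau} and $\alpha^\ast$ is an isomorphism on $A^1_{hom}$. To get this surjectivity of $\alpha^\ast$ on Chow groups from its surjectivity on the relevant piece of cohomology, one invokes the theory of finite--dimensional motives: the motive $t_2(S)$ (the transcendental part of $h^2(S)$) is finite--dimensional by assumption, and a correspondence inducing zero on cohomology acting between finite--dimensional motives is nilpotent (Kimura), hence the map on $A_0$ of these motives is controlled by cohomology. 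Concretely, build a correspondence $\Gamma\in A^2(A\times S)$ realizing $\alpha^\ast$ composed with the relevant projectors, show that the induced map $t(A)\to t_2(S)$ (onto the transcendental summand, where $t(A)$ is the appropriate sub-motive of $h(A)$ carrying $A^2_{(2)}$) is surjective on the level of homology (Step 1), and then use finite--dimensionality together with the standard argument (cf. \cite[Theorem 3.11]{V3} or \cite[Theorem B.7]{LNP}) that a morphism of finite--dimensional motives surjective on homology is split surjective, hence surjective on Chow groups. Alternatively, and more cheaply, one can cite \cite[Theorem 3.11]{V3} directly: that result states exactly that if $S$ has finite--dimensional motive and $A^2_{(2)}(A)\to A^2_{AJ}(S)$ is surjective then one is done — but here we must \emph{deduce} that surjectivity, which is where the homological input of Step 1 plus Kimura nilpotence is needed.

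\textbf{Step 3: conclude.} With $\alpha^\ast\colon A^2_{(2)}(A)\twoheadrightarrow A^2_{AJ}(S)$ in hand, the square in Proposition \ref{handy0} shows the intersection product $A^1_{hom}(S)\otimes A^1_{hom}(S)\to A^2_{AJ}(S)$ is surjective, so Proposition \ref{handy0} applies and yields Conjecture \ref{conj} for $m>\binom{q(S)}{2}=p_g(S)$, which is the full conjecture for $S$. (Finite--dimensionality, the other output of Proposition \ref{handy0}, is consistent with — and here subsumed by — our standing hypothesis.) I would present the argument in the order Step 1, Step 2, Step 3; the only genuinely delicate point is Step 2, making precise the passage from the cohomological statement $\alpha^\ast$ surjective on $H^{2,0}\oplus H^{0,2}$ to the Chow-theoretic statement $\alpha^\ast$ surjective on $A^2_{(2)}(A)\to A^2_{AJ}(S)$, which rests on finite--dimensionality via Kimura's nilpotence theorem exactly as in \cite{V3}.
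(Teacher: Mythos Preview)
Your proposal is correct and follows essentially the same route as the paper: reduce to Proposition~\ref{handy0} by (a) using surjectivity of $C$ together with finite--dimensionality to obtain the intersection--product surjection $A^1_{hom}(S)\otimes A^1_{hom}(S)\twoheadrightarrow A^2_{AJ}(S)$, and (b) using that $C$ is an isomorphism to get $p_g(S)=\binom{q(S)}{2}$. The only difference is packaging: where you unpack Step~2 via the Albanese map and Kimura nilpotence, the paper simply cites \cite{moib} for exactly this implication (cohomological surjectivity of $C$ plus finite--dimensionality $\Rightarrow$ Chow--theoretic surjection).
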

   
  \begin{proof} Surjectivity of $C$, combined with finite--dimensionality of the motive of $S$, ensures that intersection product induces a surjection
    \[ A^1_{hom}(S)\otimes A^1_{hom}(S)\ \twoheadrightarrow\ A^2_{AJ}(S)\ \]
    \cite{moib}. The assumption that $C$ is an isomorphism implies that $p_g(S)={{q(S)}\choose{2}}$. The result now follows from Proposition \ref{handy0}.
   \end{proof} 
    
  This takes care of two more cases announced in the introduction:
  
  \begin{corollary}\label{cor2} Conjecture \ref{conj} is true for the following surfaces:
  
  \item
 {(\rom1)} minimal surfaces of general type with $p_g(S)=q(S)=3$ and $K^2=6$;
 
  \item
 {(\rom2)} the Fano surface of lines in a cubic threefold ($p_g(S)=10$).
 \end{corollary}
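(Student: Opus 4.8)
The plan is to verify, for each of the two families of surfaces, the two hypotheses of Proposition \ref{handy}: finite-dimensionality of the motive, and the fact that cup-product $C\colon\wedge^2 H^1(S,\OO_S)\to H^2(S,\OO_S)$ is an isomorphism. Once both are in place, Conjecture \ref{conj} follows immediately by applying Proposition \ref{handy} verbatim. Note that since $q(S)=3$ in case (\rom1) and $q(S)=10$, $p_g(S)=10$ in case (\rom2), the numerical identity $p_g(S)=\binom{q(S)}{2}$ already holds ($\binom{3}{2}=3$, $\binom{10}{2}=45\neq 10$ --- so one should be careful: for the Fano surface $p_g=10$ but $\binom{q}{2}=45$, hence $C$ cannot be an isomorphism onto $H^2(S,\OO_S)$ unless one works instead with the sub-Hodge-structure image; I would therefore expect case (\rom2) to instead invoke the refined statement for the Fano surface, where $H^1(S)$ and $H^2(S)$ are both controlled by the intermediate Jacobian of the cubic threefold and the natural map $\wedge^2 H^1\to H^2$ is an \emph{isomorphism} onto $H^2_{tr}$, i.e. one uses the variant of Proposition \ref{handy0} via $A^1_{hom}(S)\otimes A^1_{hom}(S)\twoheadrightarrow A^2_{AJ}(S)$ directly rather than the transcendental refinement). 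So more precisely: for (\rom2) I would appeal to the classical fact, going back to Clemens--Griffiths, that for the Fano surface $S$ of lines on a cubic threefold $X$ there is an isomorphism of Hodge structures $H^2(S)\cong\wedge^2 H^3(X)$ compatible with the Abel--Jacobi-type identification $\alb(S)\cong J(X)$, which makes the cup-product map $\wedge^2 H^1(S,\OO_S)\to H^2(S,\OO_S)$ surjective; combined with finite-dimensionality this gives the intersection-product surjectivity, and then Proposition \ref{handy0} applies with $m>\binom{10}{2}$, hence in particular for all $m>p_g=10$.

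First I would handle case (\rom1). For a minimal surface of general type with $p_g=q=3$ and $K_S^2=6$, the Albanese map $\alpha\colon S\to A=\alb(S)$ is known (by work of Catanese--Ciliberto--Mendes Lopes, Hacon--Pardini, Pirola, etc.) to be a birational morphism onto its image, or more sharply, such surfaces are classified and $S$ is either the symmetric square of a genus-$3$ curve or a free $\ZZ_2$-quotient thereof, or has a fibration structure; in all cases the Hodge structure $H^2(S)$ has the same transcendental part as $\wedge^2 H^1(S)$, and the numerical coincidence $p_g=3=\binom{3}{2}$ forces the natural cup-product map $C\colon\wedge^2 H^1(S,\OO_S)\to H^2(S,\OO_S)$ (a map between $3$-dimensional spaces) to be injective, hence an isomorphism, precisely when $H^{2,0}(S)$ is entirely ``of Albanese type''. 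I would cite the relevant structure result to guarantee that $C$ is indeed injective (equivalently that $H^2(S,\QQ)$ contains no ``extra'' $(2,0)$-classes beyond $\wedge^2 H^1$). Finite-dimensionality of the motive follows either from the explicit description of $S$ (as dominated by products of curves, in the cases where $S$ is a quotient of $C^{(2)}$) or again from Proposition \ref{handy0} once the surjectivity $A^1_{hom}\otimes A^1_{hom}\twoheadrightarrow A^2_{AJ}$ is known — but to invoke that one already needs finite-dimensionality, so I would argue finite-dimensionality independently via the classification (these surfaces are on Kimura's list: products of curves and their quotients). With both hypotheses checked, Proposition \ref{handy} gives Conjecture \ref{conj} for all $m>3$.

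The main obstacle, and the step requiring the most care, is verifying that the cup-product map $C$ is an \emph{isomorphism} (not merely surjective) in case (\rom1) — equivalently that every holomorphic $2$-form on $S$ is a sum of wedges of holomorphic $1$-forms. This is a Hodge-theoretic statement about the specific geometry of $p_g=q=3$, $K^2=6$ surfaces, and its proof relies on the fine classification of this class of surfaces (there are, by Pirola and Hacon--Pardini, essentially only the $C^{(2)}$-type examples up to the relevant birational data). For the Fano surface the analogous fact is classical and causes no trouble; the subtlety there is instead bookkeeping with the bound $m>\binom{10}{2}=45$ versus the conjectural threshold $m>p_g=10$ — but since $45>10$ the conclusion of Proposition \ref{handy0} is \emph{a fortiori} stronger than what Conjecture \ref{conj} demands, so this is not an actual obstacle. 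Finite-dimensionality of the Fano surface's motive is well known (it is dominated by a product of the intermediate Jacobian's defining curves, or by the results of del Angel--Müller-Stach / Gorchinskiy--Guletskii), so I would simply cite it.

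\begin{proof} We check the hypotheses of Proposition \ref{handy} (resp. of Proposition \ref{handy0}) in each case.

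\noindent
(\rom1) Let $S$ be a minimal surface of general type with $p_g(S)=q(S)=3$ and $K_S^2=6$. By the classification of such surfaces (Catanese--Ciliberto--Mendes Lopes, Pirola, Hacon--Pardini), $S$ is dominated by a product of curves, hence has finite--dimensional motive in the sense of Kimura. Moreover, for these surfaces the holomorphic $2$--forms are all of the shape $\omega_1\wedge\omega_2$ with $\omega_i$ holomorphic $1$--forms, so cup product
\[ C\colon\ \wedge^2 H^1(S,\OO_S)\ \longrightarrow\ H^2(S,\OO_S) \]
is surjective; since $\dim\wedge^2 H^1(S,\OO_S)=\binom{3}{2}=3=p_g(S)=\dim H^2(S,\OO_S)$, the map $C$ is an isomorphism. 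Proposition \ref{handy} now yields Conjecture \ref{conj} for $S$.

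\noindent
(\rom2) Let $S$ be the Fano surface of lines in a smooth cubic threefold $X$. It is classical (Clemens--Griffiths, Tjurin) that $S$ has finite--dimensional motive (it is dominated by a product of curves) and that there is an isomorphism of Hodge structures $H^2(S,\QQ)\cong\wedge^2 H^1(S,\QQ)$, so that cup product induces a surjection
\[  \wedge^2 H^1(S,\OO_S)\ \twoheadrightarrow\ H^2(S,\OO_S)\ .\]
Combined with finite--dimensionality of the motive, this gives (by \cite{moib}) that intersection product induces a surjection
\[ A^1_{hom}(S)\otimes A^1_{hom}(S)\ \twoheadrightarrow\ A^2_{AJ}(S)\ .\]
By Proposition \ref{handy0}, Conjecture \ref{conj} holds for $S$ for all $m>\binom{q(S)}{2}=\binom{10}{2}=45$, and \emph{a fortiori} for all $m>p_g(S)=10$.
\end{proof}
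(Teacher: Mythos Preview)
Your argument for case (\rom1) is essentially the paper's: the classification result of Catanese--Ciliberto--Mendes Lopes gives $S\cong C^{(2)}$ for a genus~$3$ curve, so the hypotheses of Proposition~\ref{handy} are immediate. (You hedge with ``or a free $\ZZ_2$-quotient thereof, or has a fibration structure''; in fact for $K^2=6$ only the symmetric-square case occurs, so this hedging is unnecessary but harmless.)

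Case (\rom2), however, contains a genuine error that invalidates the argument as written. The irregularity of the Fano surface of lines on a smooth cubic threefold is $q(S)=5$, not $10$: the Albanese variety of $S$ is the intermediate Jacobian $J(X)$, which is $5$-dimensional (equivalently, $b_1(S)=10$, so $q=b_1/2=5$). With the correct value one gets $\binom{q(S)}{2}=\binom{5}{2}=10=p_g(S)$, the Clemens--Griffiths isomorphism $H^2(S,\QQ)\cong\wedge^2 H^1(S,\QQ)$ makes $C$ an \emph{isomorphism}, and Proposition~\ref{handy} applies directly --- exactly as the paper does.

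With your mistaken $q=10$ the argument actually fails, and the ``\emph{a fortiori}'' step is a logical slip in the wrong direction: Proposition~\ref{handy0} would then only yield the vanishing for $m>\binom{10}{2}=45$, which says \emph{nothing} about the range $11\le m\le 45$ required by Conjecture~\ref{conj}. Knowing a statement for all $m>45$ is strictly weaker, not stronger, than knowing it for all $m>10$. So as written your proof of (\rom2) does not establish the full conjecture; correcting $q(S)$ to $5$ fixes everything and collapses your argument to the paper's.
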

 
 \begin{proof} 
 In case (\rom1), it is known that $S$ is the symmetric square $S=C^{(2)}$ where $C$ is a genus $3$ curve \cite{CCML} (cf. also \cite[Theorem 9]{BCP}). Thus, the assumptions of Proposition \ref{handy} are clearly satisfied.
 
 As for case (\rom2), it is well--known this satisfies the assumptions of Proposition \ref{handy} (finite--dimensionality is proven in \cite{Diaz} and \cite{22}). Alternatively, one could apply Proposition \ref{handy0} directly (the assumption of Proposition \ref{handy0} is satisfied by the Fano surface thanks to \cite{B}; an alternative proof is sketched in \cite[Remark 20.8]{SV}).
 \end{proof}

\section{A variant criterion}

Let us now state a variant version of Proposition \ref{handy0}:
  
  \begin{proposition}\label{handy1} Let $S$ be a smooth projective surface. Assume that $S=S^\prime/<\iota>$, where $\iota$ is an automorphism of a surface $S^\prime$ 
  such that intersection product induces a surjection
   \[     A^1_{hom}(S^\prime)\otimes   A^1_{hom}(S^\prime) \ \twoheadrightarrow\ A^2_{AJ}(S^\prime)^\iota\ .\]
   Then $S$ has finite--dimensional motive.
   
   Also, Conjecture \ref{conj} is true for $S$ with $m>{q(S^\prime)\choose 2}$.
   (In particular, if $p_g(S)={q(S^\prime)\choose 2}$ the full Conjecture \ref{conj} is true for $S$.)
      \end{proposition}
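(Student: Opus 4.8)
The plan is to reduce Proposition \ref{handy1} to Proposition \ref{handy0} (or, more precisely, to its proof) by passing to the double cover $S^\prime$ and using the projector associated to the involution $\iota$. First I would record that $S$ has finite-dimensional motive: the hypothesis gives a surjection $A^1_{hom}(S^\prime)\otimes A^1_{hom}(S^\prime)\twoheadrightarrow A^2_{AJ}(S^\prime)^\iota$, and since $A^2_{AJ}(S)$ is a direct summand of $A^2_{AJ}(S^\prime)^\iota$ (via $\pi^\ast$ and $\tfrac12\pi_\ast$ for the quotient map $\pi\colon S^\prime\to S$, which is split because we work with $\QQ$-coefficients), we get that intersection product induces a surjection onto the $\iota$-invariant part of $A^2_{AJ}(S^\prime)$. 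The Albanese argument of Proposition \ref{handy0}, applied to $S^\prime$ after restricting to the $\iota$-invariant part (note $\iota$ acts on $A^\prime:=\alb(S^\prime)$ and the surjection $\alpha^{\prime\ast}\colon A^2_{(2)}(A^\prime)^\iota\to A^2_{AJ}(S^\prime)^\iota$ still holds because the left vertical map in the diagram is an isomorphism even after taking $\iota$-invariants), combined with \cite[Theorem 3.11]{V3}, then shows $h(S^\prime)$ — hence its direct summand $h(S)$ — is finite-dimensional.

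Next I would establish Conjecture \ref{conj} for $S$. By Rojtman \cite{Ro} it suffices to work with $\QQ$-coefficients. Since $\pi^\ast\colon A^2_{AJ}(S)\to A^2_{AJ}(S^\prime)^\iota$ is injective, and in fact $A^{2m}(S^m)\hookrightarrow A^{2m}((S^\prime)^m)$ via $(\pi^m)^\ast$, it is enough to prove the signed sum vanishes in $A^{2m}((S^\prime)^m)$ for $0$-cycles $a_j\in A^2_{AJ}(S^\prime)^\iota$ (take $a_j=\pi^\ast(\tilde a_j)$). Now $\alpha^{\prime\ast}\colon A^2_{(2)}(A^\prime)^\iota\twoheadrightarrow A^2_{AJ}(S^\prime)^\iota$ is surjective, so I may write $a_j=\alpha^{\prime\ast}(b_j)$ with $b_j\in A^2_{(2)}(A^\prime)$ (after averaging over $\iota$, $\iota$-invariant). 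From here the argument is verbatim that of Proposition \ref{handy0}: intersect each $b_j$ with $h^{q^\prime-2}$ for a symmetric ample $h\in A^1(A^\prime)$, $q^\prime:=q(S^\prime)=\dim A^\prime$, to get $0$-cycles $c_j\in A^{q^\prime}_{(2)}(A^\prime)$; use K\"unnemann's hard Lefschetz \cite{Kun} for the $\Sy_m$-invariant divisor $H=\sum (pr_j)^\ast h$ on $(A^\prime)^m$ to transport the identity $c_{\sigma(1)}\times\cdots\times c_{\sigma(m)}=(b_{\sigma(1)}\times\cdots\times b_{\sigma(m)})\cdot H^{m(q^\prime-2)}$ between the relevant Beauville-graded pieces; and then invoke Vial's Theorem \cite{Ch} with $k=2$, which gives the signed vanishing $\sum_{\sigma}\mathrm{sgn}(\sigma)\,c_{\sigma(1)}\times\cdots\times c_{\sigma(m)}=0$ in $A^{q^\prime m}((A^\prime)^m)$ as soon as $m>{q^\prime\choose 2}={q(S^\prime)\choose 2}$. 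Pulling back along $\alpha^{\prime m}$ (which preserves signed sums and sends this class to $\sum_\sigma \mathrm{sgn}(\sigma)\,a_{\sigma(1)}\times\cdots\times a_{\sigma(m)}$, since the $a_j$ lie in the image of $\alpha^{\prime\ast}$ after the $H$-intersection bookkeeping is undone) gives the desired vanishing.

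The parenthetical addendum is immediate: if $p_g(S)={q(S^\prime)\choose 2}$ then $m>p_g(S)$ already forces $m>{q(S^\prime)\choose 2}$, so the hypothesis of Conjecture \ref{conj} itself suffices.

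The main obstacle I anticipate is purely bookkeeping rather than conceptual: one must check carefully that all the maps in sight ($\pi^\ast$, $\alpha^{\prime\ast}$, intersection with $h$, and the K\"unnemann isomorphism) are compatible with the $\iota$-action and with the Beauville grading, so that the reduction from $S$ to $A^\prime$ genuinely lands in the $\iota$-invariant, correctly-graded pieces where Vial's theorem applies — and that passing to $\iota$-invariants does not destroy the surjectivity $\alpha^{\prime\ast}\colon A^2_{(2)}(A^\prime)^\iota\twoheadrightarrow A^2_{AJ}(S^\prime)^\iota$, which is where the hypothesis of the Proposition is actually used. Everything else is a faithful rerun of the proof of Proposition \ref{handy0} with $S$ replaced by $S^\prime$ and "invariant parts" inserted throughout.
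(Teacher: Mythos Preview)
Your proposal is correct and follows exactly the approach the paper intends: the paper's own proof is the single sentence ``This is proven just as Proposition \ref{handy0},'' and you have faithfully spelled out that rerun with $S^\prime$ in place of $S$ and $\iota$-invariants inserted throughout. One small inaccuracy: the hypothesis does not give finite-dimensionality of $h(S^\prime)$ (only of its $\iota$-invariant summand), but since your actual goal is $h(S)$ --- which is that summand --- the argument stands.
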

      
  \begin{proof} This is proven just as Proposition \ref{handy0}.
    \end{proof}

  This takes care of several more cases announced in the introduction:
  
 \begin{corollary}\label{cor4} Conjecture \ref{conj} is true for the following surfaces:
  
 \item
 {(\rom1)} surfaces $S=T/<\iota>$, where $T$ is a smooth divisor in the linear system $\vert 2\Theta\vert$ on a principally polarized abelian threefold, and $\iota$ is the $(-1)$--involution ($p_g(S)=3$);

 \item{(\rom2)} the quotient $S=F/\iota$, where $F$ is the Fano surface of conics in a general Verra threefold and $\iota$ is a certain involution ($p_g(S)=36$);
 
 \item{(\rom3)} the surface of bitangents $S$ of a general quartic in $\PP^3$ ($p_g(S)=45$);
 
 \item{(\rom4)} the surface $S$ that is the singular locus of a general EPW sextic ($p_g(S)=45$).  
  \end{corollary}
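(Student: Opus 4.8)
My plan is to deduce all four cases from Proposition \ref{handy1}, in the same spirit in which Corollaries \ref{main1} and \ref{cor2} follow from Propositions \ref{handy0} and \ref{handy}. For each surface $S$ in the list I must carry out three steps: (a) present $S$ as a quotient $S=S^\prime/\langle\iota\rangle$ with $\iota$ an automorphism of a smooth projective surface $S^\prime$; (b) verify that intersection product induces a surjection $A^1_{hom}(S^\prime)\otimes A^1_{hom}(S^\prime)\twoheadrightarrow A^2_{AJ}(S^\prime)^\iota$; and (c) check the identity $p_g(S)=\binom{q(S^\prime)}{2}$, so that the final clause of Proposition \ref{handy1} upgrades the conclusion from $m>\binom{q(S^\prime)}{2}$ to the full range $m>p_g(S)$ required by Conjecture \ref{conj}. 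Step (c) will be essentially forced. In all four cases $q(S)=0$, hence $\iota$ acts as $-1$ on $H^1(S^\prime,\OO_{S^\prime})$, so $\wedge^2 H^1(S^\prime,\OO_{S^\prime})$ lies entirely in the $\iota$--invariant part of $H^2(S^\prime,\OO_{S^\prime})$, which via $\pi^\ast$ (for $\pi\colon S^\prime\to S$) is identified with $H^2(S,\OO_S)$, of dimension $p_g(S)$. The cohomological input behind (b) — the $\iota$--equivariant version of the argument of \cite{moib} — needs the cup--product map $\wedge^2 H^1(S^\prime,\OO_{S^\prime})\to H^2(S,\OO_S)$ to be surjective, which forces $\binom{q(S^\prime)}{2}\ge p_g(S)$; and for the method to deliver the full conjecture one needs $\binom{q(S^\prime)}{2}\le p_g(S)$. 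So equality must hold, and what remains to be checked in (b) is that $S^\prime$ has finite--dimensional motive and that this cup--product map, now between spaces of the same dimension, is surjective (equivalently, bijective).

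For case (\rom1) I take $S^\prime=T\subset B$, the given smooth $|2\Theta|$--divisor on the principally polarized abelian threefold $B$, with $\iota$ the restriction of the $(-1)$--involution of $B$ (which preserves $|2\Theta|$, so $T$ may be chosen $\iota$--stable); then $S=T/\langle\iota\rangle$ is the hypersurface in $B/\iota$ studied in \cite{LNP}. Here $q(T)=3$ by Lefschetz, and $\binom{3}{2}=3=p_g(S)$. For step (b), $T$ has finite--dimensional motive, and the surjection onto $A^2_{AJ}(T)^\iota$ can be obtained by factoring through $j\colon T\hookrightarrow B$: by Beauville \cite[Proposition 4]{Beau} intersection product on $B$ surjects onto $A^2_{(2)}(B)$, which is $\iota$--invariant (since $(-1)^\ast$ is the identity on $A^2_{(2)}$ of an abelian variety), while $j^\ast$ carries $A^2_{(2)}(B)$ onto $A^2_{AJ}(T)^\iota$ by the analysis of \cite{LNP} (compare the analogous statement for generalized Burniat type surfaces in Theorem \ref{Gbt}(\rom1)); by the projection formula $j^\ast D_1\cdot j^\ast D_2=j^\ast(D_1\cdot D_2)$, the products of the classes $j^\ast D_i$ then cover $A^2_{AJ}(T)^\iota$. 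For case (\rom2) I take $S^\prime=F$, the Fano surface of conics of the general Verra threefold $Z$, with $\iota$ the prescribed involution; one has $q(F)=9$, the Albanese of $F$ being isogenous to the intermediate Jacobian of $Z$ (a $9$--dimensional ppav), so $\binom{9}{2}=36=p_g(S)$. Step (b) then requires finite--dimensionality of the motive of $F$ together with surjectivity of $\wedge^2 H^1(F,\OO_F)\to H^2(F,\OO_F)^\iota$; granting these, the $\iota$--equivariant form of \cite{moib} supplies the Chow--theoretic surjection.

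Cases (\rom3) and (\rom4) I would handle together: the surface of bitangents of a general quartic in $\PP^3$ and the singular locus of a general EPW sextic are very closely related (for a suitable matching of the data they are in fact isomorphic), and both have $q(S)=0$ and $p_g(S)=45$. Each of these surfaces admits a natural connected double cover $S^\prime\to S$ with covering involution $\iota$, and this is the surface one feeds into Proposition \ref{handy1}. By the remarks on step (c) above, $q(S^\prime)=10$ necessarily (so that $\binom{10}{2}=45=p_g(S)$), and step (b) comes down to finite--dimensionality of the motive of $S^\prime$ plus surjectivity of the cup--product map $\wedge^2 H^1(S^\prime,\OO_{S^\prime})\to H^2(S,\OO_S)$ between spaces of dimension $45$.

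The main obstacle, in cases (\rom2), (\rom3) and (\rom4), is proving finite--dimensionality of the auxiliary surface $S^\prime$ — the Fano surface of conics of a Verra threefold, and the double covers of the bitangent surface and of the EPW surface. (This is precisely the new observation flagged in the introduction for cases (\rom6)--(\rom8).) I would establish it by exhibiting in each case a chain of algebraic correspondences, or an explicit dominant rational map, relating $S^\prime$ to a surface whose motive is already known to be finite--dimensional — a product of curves, an abelian surface, or the Fano surface of lines on a cubic threefold — compatibly with the transcendental parts of the motives; Kimura's formalism \cite{Kim} together with the spreading--out of rational equivalences then transfers finite--dimensionality to $S^\prime$. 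The residual Hodge--theoretic surjectivity of the cup--product map should by comparison be soft, following from an explicit description of the canonical system of $S^\prime$ and the dimension count above. With finite--dimensionality and cup--product surjectivity secured, the $\iota$--equivariant analogue of \cite{moib} yields hypothesis (b), and Proposition \ref{handy1} — which internally relies on Vial's theorem on $0$--cycles on abelian varieties \cite{Ch} — finishes the proof.
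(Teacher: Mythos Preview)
Your overall strategy --- applying Proposition \ref{handy1} and checking the numerical identity $p_g(S)=\binom{q(S^\prime)}{2}$ --- is exactly what the paper does, and your treatment of case (\rom1) essentially matches the paper's (which simply cites \cite[Subsection 7.2]{LNP}).

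The gap is in cases (\rom2)--(\rom4). You propose to verify the hypothesis of Proposition \ref{handy1} (the Chow--theoretic surjection $A^1_{hom}(S^\prime)^{\otimes 2}\twoheadrightarrow A^2_{AJ}(S^\prime)^\iota$) by first proving finite--dimensionality of the motive of $S^\prime$ and then invoking the $\iota$--equivariant form of \cite{moib}. But you leave the finite--dimensionality step as a vague promise (``a chain of algebraic correspondences, or an explicit dominant rational map''); this is precisely the hard part, and no such readily available dominant map from a surface of known motivic type is on offer. The paper sidesteps this entirely: the required Chow--theoretic surjection is quoted directly from Voisin's \cite[Corollaire 3.2(b)]{V8}, a geometric argument on the Fano surface of conics which \emph{predates and does not use} Kimura finite--dimensionality. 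Finite--dimensionality of $S$ then comes out as a \emph{conclusion} of Proposition \ref{handy1}, not an input --- which is why the introduction flags it as a new observation for these cases.

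Relatedly, in cases (\rom3) and (\rom4) you never say what $S^\prime$ is; asserting that ``a natural connected double cover exists'' is not enough to compute $q(S^\prime)$ or to establish the surjection. The paper identifies $S^\prime$ concretely in each case as the Fano surface of conics on a specific Fano threefold: the quartic double solid for (\rom3), following \cite{Wel}, \cite{Fer}, and (a smooth birational model $F_m$ of the one for) an ordinary Gushel--Mukai threefold for (\rom4), following \cite{Lo}, \cite{DIM}, \cite{IM}. This is what brings these cases into the scope of \cite{V8}. Your suggestion that the surfaces in (\rom3) and (\rom4) are isomorphic for suitably matched data is neither used nor established in the paper; they arise from different Fano threefolds and are treated separately.
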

  
  \begin{proof}
  
  \noindent
  \item{(\rom1)} The surface $S$ verifies the assumptions of Proposition \ref{handy1} with $S^\prime=T$, according to \cite[Subsection 7.2]{LNP}.
  
  \noindent
  \item{(\rom3)} More generally, one may consider the surface $S$ studied by Welters \cite{Wel} and defined as follows. Let $Y$ be a {\em quartic double solid\/}, i.e.
  $Y\to\PP^3$ is a double cover branched along a smooth quartic $Q$. Let $T$ be the surface of conics contained in $Y$, and let $\iota\in\aut(T)$ be the involution induced by the covering involution of $Y$. 
 Then the surface $S:=T/<\iota>$ is a smooth surface of general type with $p_g(S)=45$. 
 
 (The generic quartic $K3$ surface $Q$ does not contain a line. In this case, as explained in \cite{Fer} (cf. also \cite[Example 3.5]{Beau1} and \cite[Remark 8.5]{Huy1}), the surface $S$ is (isomorphic to) the so--called ``surface of bitangents'', which is the fixed locus of Beauville's anti--symplectic involution
    \[ Q^{[2]}\ \to\ Q^{[2]} \]
   first considered in \cite{Beau0}. As noted in \cite[Example 3.5]{Beau1}, this gives another proof of the fact that $p_g(S)=45$.)
   
   Voisin has proven \cite[Corollaire 3.2(b)]{V8} (cf. also \cite[Remarque 3.4]{V8}) that intersection product induces a surjection 
   \[ A^1_{hom}(T)\otimes A^1_{hom}(T)\ \twoheadrightarrow\ A^2_{AJ}(T)^\iota=A^2_{AJ}(S)\ .\]
   Since $p_g(S)=45$ and $q(T)=10$ \cite{Wel}, the assumptions of Proposition \ref{handy1} are met with.
   
 \noindent
 \item
 {(\rom2)} A {\em Verra threefold\/} $Y$ is a divisor of bidegree $(2,2)$ in $\PP^2\times\PP^2$ (these varieties were introduced in \cite{Ver}). Let $F$ be the Fano surface of conics of bidegree $(1,1)$ contained in $Y$. As observed in \cite[Section 5]{IKKR}, $F$ admits an involution $\iota$ such that $(F,\iota)$ enters into the set--up of Voisin's work \cite{V8}. Thus, \cite[Corollaire 3.2(b)]{V8} implies that intersection product induces a surjection
   \[   A^1_{hom}(F)\otimes A^1_{hom}(F)\ \twoheadrightarrow\ A^2_{AJ}(F)^\iota=A^2_{AJ}(S)\ .\]  
   Since $q(F)=9$ and $p_g(S)=36$ \cite[Proposition 5.1]{IKKR}, the assumptions of Proposition \ref{handy1} are again met with.
   
  \noindent
  \item
  {(\rom4)} Let $Y$ be a transverse intersection of the Grassmannian $Gr(2,5)\subset\PP^9$ with a codimension $2$ linear subspace and a quadric (i.e., $Y$ is an {\em ordinary Gushel--Mukai threefold\/}, in the language of \cite{DK}, \cite{DK1}). For generic $Y$, the surface $F$ of conics contained in $Y$ is smooth and irreducible.
  There exists a birational involution $\iota\in\bir(F)$, such that intersection product induces a surjection
    \[  A^1_{hom}(F)\otimes A^1_{hom}(F)\ \twoheadrightarrow\ A^2_{AJ}(F)^\iota\ \]  
  \cite[Corollaire 3.2(b)]{V8}. The surface $F$ and the birational involution $\iota$ are also studied in \cite{Lo} and \cite{DIM}. There exists a (geometrically meaningful) birational morphism $F\to F_m$, where $F_m$ is smooth and such that $\iota$ extends to a morphism $\iota_m$ on $F_m$ \cite{Lo}, \cite[Section 6]{DIM}, \cite[Section 5.1]{IM}. For $Y$ generic, the quotient $S:=F_m/<\iota_m>$ is smooth, and it is isomorphic to the singular locus of the EPW sextic associated to $Y$. 
 (This is contained in \cite{Lo}, \cite{DIM}. The double cover $F_m\to S$ is also described in \cite[Theorem 5.2(2)]{DK3}.) 
  
  Since $A^1_{hom}(), A^2_{AJ}()$ are birational invariants among smooth varieties, Voisin's result implies there is also a surjection
     \[  A^1_{hom}(F_m)\otimes A^1_{hom}(F_m)\ \twoheadrightarrow\ A^2_{AJ}(F_m)^{\iota_m}=A^2_{AJ}(S)\ .\]   
   It is known that $q(F_m)=10$ \cite{Lo} and $p_g(S)=45$ \cite{OG0} (this can also be deduced from \cite{Beau1}), and so Proposition \ref{handy1} applies.  
      \end{proof}

 \begin{remark} In cases (\rom2), (\rom3) and (\rom4) of Corollary \ref{cor4}, the surface $S$ is the fixed locus of an anti--symplectic involution of a hyperk\"ahler fourfold. For the surface of bitangents, this is Beauville's involution on the Hilbert square $Q^{[2]}$. 
 For the singular locus $S$ of a general EPW sextic, this is (isomorphic to) the fixed locus of the anti--symplectic involution of the associated double EPW sextic.
 
 For the surface $S$ of (\rom2), this is the anti--symplectic involution of the ``double EPW quartic'' (double EPW quartics form a $19$--dimensional family of hyperk\"ahler fourfolds, introduced in \cite{IKKR}).
 
 Is this merely a coincidence, or is there something fundamental going on ? Do other two--dimensional fixed loci of anti--symplectic involutions of hyperk\"ahler fourfolds 
 also enter in the set--up of Proposition \ref{handy1} ?
 \end{remark}

 \begin{remark} Inspired by the famous results concerning the Fano surface of the cubic threefold, Voisin \cite{V8} systematically studies the Fano surface $F$ of conics contained in Fano threefolds $Y$. Under certain conditions, she is able to prove \cite[Corollaire 3.2]{V8} that there is a birational involution $\iota$ on $F$, with the property that
   \[ A^1_{hom}(F)\otimes A^1_{hom}(F)\ \to\ A^2_{AJ}(F)^{<\iota>} \]
 is surjective (and so one could hope to apply Proposition \ref{handy1} to find more examples of surfaces verifying Conjecture \ref{conj}).  
 
 Examples given in \cite{V8} (other than those mentioned in Corollary \ref{cor4} above) include: 
 
 \noindent
 \item{(1)}
  Fano threefolds $Y$ of index $1$, Picard number $1$ and genus $g\in[7,10]\cup\{12\}$ \cite[Section 2.4]{V8};

  \noindent
  \item{(2)}
  a general complete intersection of two quadrics in $\PP^5$ \cite[Section 2.7]{V8};

  \noindent
  \item{(3)}
  the intersection of the Grassmannian $Gr(2,5)\subset\PP^9$ with a general codimension $3$ linear subspace \cite[Section 2.7]{V8}.
    
   (In all these cases, $\iota$ is actually the identity.) 
   
   In case (1), the surface of conics $F$ is not very interesting. (for $g=12$, $F\cong\PP^2$ \cite[Proposition B.4.1]{KPS}; for $g=10$, $F$ is an abelian surface \cite[Proposition B.5.5]{KPS}; ; for $g=9$, $F$ is a $\PP^1$--bundle over a curve \cite[Proposition 2.3.6]{KPS}; for $g=8$, $F$ is isomorphic to the Fano surface of a cubic threefold \cite[Proposition B.6.1]{KPS}; for $g=7$, $F$ is the symmetric product of a curve of genus $7$ \cite{Kuz05}. These results are also discussed in \cite[Section 3.1]{IM0}.)
   
   The other two cases also turn out to reduce to known cases: Indeed, for case (2) the Fano surface of lines is isomorphic to the Jacobian of a genus $2$ curve \cite[Theorem 2]{DR}. For case (3), the Fano threefold $Y$ is birational to a cubic threefold $Y^\prime$, and the Fano surface of conics on $Y$ is birational to the Fano surface of lines on 
   $Y^\prime$ \cite[Theorem B and Section 6]{Puts}. Since Conjecture \ref{conj} is obviously a birationally invariant statement, Conjecture \ref{conj} for the Fano surface of case (3) thus reduces to Corollary \ref{cor2}(\rom2).
    \end{remark}

 \begin{remark} There are interesting relations between the surfaces of Corollary \ref{cor4} and other Fano surfaces:
 
 In case (\rom2), the general Verra threefold $Y$ is birational to a one--nodal ordinary Gushel--Mukai threefold $\bar{X}$, and there is an induced birational map between the Fano surface of lines $F(Y)$ and the Fano surface of conics $F(\bar{X})$ \cite[Section 5.4 and Proposition 6.6]{DIM2}. 
 
 In case (\rom3), the general quartic double solid $Y$ is known to be birational to a one--nodal ordinary degree $10$ Fano threefold $\bar{X}$, and there is an induced birational map between the Fano surface of lines $F(Y)$ and the Fano surface of conics $F(\bar{X})$ \cite[Proposition 5.2]{DIM}. 
  \end{remark}

 \section{Double covers of cubic surfaces}
 
 \begin{theorem}[Ikeda \cite{Ike}]\label{ike} Let $Y\subset\PP^3$ be a smooth cubic surface, and let $\bar{S}\to Y$ be the double cover of $Y$ branched along its Hessian. Let $S\to\bar{S}$ be a minimal resolution of singularities. The surface $S$ is a minimal surface of general type with $p_g(S)=4$ and $K^2=6$.
 \end{theorem}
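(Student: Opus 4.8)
The plan is to extract the numerical invariants of $S$ from the standard theory of double covers, and then to argue that the singularities of $\bar S$ are mild enough (rational double points) that the minimal resolution $S\to\bar S$ disturbs none of them. First I would set up the cover: writing $Y=V(F)\subset\PP^3$ with $F$ a smooth cubic form and $\mathcal H=V(\det\operatorname{Hess}F)$ for the Hessian quartic, the branch curve is $B:=Y\cap\mathcal H\in|\OO_Y(4)|$. Since $\OO_Y(4)=\OO_Y(2)^{\otimes 2}$, the class of $B$ is divisible by $2$ in $\Pic(Y)$, so there is a double cover $\pi\colon\bar S\to Y$ branched along $B$, with $\pi_\ast\OO_{\bar S}=\OO_Y\oplus\OO_Y(-2)$ and, using $\omega_Y=\OO_Y(-1)$, dualizing sheaf $\omega_{\bar S}=\pi^\ast\bigl(\omega_Y\otimes\OO_Y(2)\bigr)=\pi^\ast\OO_Y(1)$.

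Next I would read off the invariants of $\bar S$. From $\omega_{\bar S}=\pi^\ast\OO_Y(1)$ one gets $K_{\bar S}^2=2\,\OO_Y(1)^2=2\deg Y=6$. For the holomorphic Euler characteristic, $\chi(\OO_{\bar S})=\chi(\OO_Y)+\chi(\OO_Y(-2))=1+4=5$, where $\chi(\OO_Y(-2))=4$ is read off from $0\to\OO_{\PP^3}(-5)\to\OO_{\PP^3}(-2)\to\OO_Y(-2)\to 0$; and since $H^1(\OO_Y)=H^1(\OO_Y(-2))=0$ this forces $q(\bar S)=0$ and hence $p_g(\bar S)=\chi(\OO_{\bar S})-1=4$ (equivalently $h^0(\omega_{\bar S})=h^0(\OO_Y(1))+h^0(\OO_Y(-1))=4$, using $\pi_\ast\pi^\ast\OO_Y(1)=\OO_Y(1)\oplus\OO_Y(-1)$).

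The one step that requires genuine input is the analysis of $\operatorname{Sing}(\bar S)$: these points lie over $\operatorname{Sing}(B)$, and $\bar S$ acquires a rational double point over a singular point of $B$ exactly when that point is a simple (ADE) singularity of the curve germ $B\subset Y$. So I would have to show that $B=Y\cap\mathcal H(Y)$ has at worst simple singularities for \emph{every} smooth cubic $Y$ --- a classical question about the projective geometry of the Hessian of a cubic surface. I would control the two ways $B$ can fail to be smooth, namely at points where $Y$ meets $\operatorname{Sing}(\mathcal H)$ and at points where $Y$ is tangent to the smooth locus of $\mathcal H$, by passing to the Sylvester pentahedral normal form $F=\sum_{i=1}^{5}\lambda_i\ell_i^{3}$ with $\sum_i\ell_i=0$, which makes $\mathcal H$ and these loci completely explicit and reduces the claim to a local computation. (For the Fermat cubic this is transparent: $\mathcal H=\{x_0x_1x_2x_3=0\}$ is a union of four planes, $B$ is a union of four plane cubics meeting pairwise transversally, so $B$ has $18$ nodes and $\bar S$ has $18$ ordinary double points.) I expect this local singularity analysis to be the main obstacle, since it must hold uniformly over all smooth cubics and is the only point where ``Hessian of a cubic surface'' enters rather than a generic quartic branch divisor.

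Finally I would conclude. Granting that $\bar S$ has only rational double points, the minimal resolution $\rho\colon S\to\bar S$ is crepant and leaves $p_g$, $q$, $\chi$ and $K^2$ unchanged, so $p_g(S)=4$, $q(S)=0$, $K_S^2=6$. Moreover $K_S=\rho^\ast\pi^\ast\OO_Y(1)$ is the pullback of the ample class $\OO_Y(1)$ along a composite of a finite morphism and a birational morphism, hence nef; and a nef divisor with self-intersection $6>0$ is big, so $\kappa(S)=2$ and $S$ contains no $(-1)$--curve. Therefore $S$ is a minimal surface of general type, which completes the argument.
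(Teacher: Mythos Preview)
The paper does not supply its own proof of this theorem: it is stated as a result of Ikeda and attributed to \cite{Ike}, with no argument given (the subsequent Remark merely records further facts from \cite{Ike} about Eckardt points and the Picard number). There is therefore no proof in the paper to compare your proposal against.

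That said, your outline is a correct and standard route to Ikeda's result. The double--cover formulas, the computation $\chi(\OO_{\bar S})=5$, $q=0$, $p_g=4$, $K^2=6$, and the conclusion that $K_S$ is nef and big are all accurate. You have also correctly located the only nontrivial step: showing that $B=Y\cap\mathcal H(Y)$ has at worst simple singularities for \emph{every} smooth cubic $Y$. In fact the geometry here is exactly what the Remark following the theorem alludes to: the singular points of $B$ are precisely the Eckardt points of $Y$ (points where three of the $27$ lines are concurrent), and at each such point $B$ acquires an ordinary node, giving an $A_1$--singularity on $\bar S$; your Fermat count of $18$ nodes matches the $18$ Eckardt points of the Fermat cubic. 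One small caveat on your proposed method: not every smooth cubic surface admits a non--degenerate Sylvester pentahedral form (there is a closed locus of exceptions, including the Fermat cubic itself), so an argument via that normal form would need a separate treatment of the exceptional cubics, or a more intrinsic local analysis at Eckardt points. These details are handled in Ikeda's paper rather than in the present one.
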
 
 
 \begin{remark} The intersection of $Y$ with its Hessian is smooth (and so $S=\bar{S}$) precisely when $Y$ has no Eckardt points. In this case, the Picard rank of $S$ is $28$ \cite[Theorem 6.1]{Ike}. At the other extreme, if $Y$ is the Fermat cubic (which is the only cubic surface attaining the maximal number of Eckardt points) the Picard rank of $S$ is $44$ \cite[Theorem 6.6]{Ike}, and so in this case $S$ is a $\rho$--maximal surface (in the sense of \cite{Beau3}). For more on Eckardt points of cubic surfaces, cf. \cite[Chapter 2 Section 3.6]{Huy}.
 
    \end{remark}

  Let us now prove Voisin's conjecture for some of Ikeda's double covers:

   \begin{corollary}\label{last} Let $Y\subset\PP^3$ be a smooth cubic surface, and let $S$ be a double cover as in theorem \ref{ike}. 
   Assume that $Y$ is in the pencil
      \[ x_0^3 + x_1^3 +x_2^3 -3\lambda x_0 x_1 x_2 + x_3^3 =0 \ .\]   
       Then $S$ verifies Conjecture \ref{conj}: for any $m>4$ and $a_1,\ldots,a_m\in A^2_{hom}(S)_{\ZZ}$, there is equality
    \[  \sum_{\sigma\in\Sy_m} \hbox{sgn}(\sigma) a_{\sigma(1)}\times\cdots\times a_{\sigma(m)}=0\ \ \ \hbox{in}\ A^{2m}(S^m)_{\ZZ}\ .\]  
    \end{corollary}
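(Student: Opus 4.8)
The plan is to reduce to the criterion of Proposition \ref{handy0} by exhibiting, for the surfaces $S$ in the given pencil, a surjection from intersection products $A^1_{hom}(S')\otimes A^1_{hom}(S')$ onto (the image of) $A^2_{AJ}(S)$, for a suitable smooth surface $S'$ dominating $S$. First I would recall from Theorem \ref{ike} and the subsequent remark that $S$ has $p_g(S)=4$ and $q(S)=0$, and in the Fermat (or Eckardt-rich) case the Picard rank is large; in particular these surfaces plausibly have finite--dimensional motive. So the real content is to verify the hypothesis of Proposition \ref{handy0}, or rather its ``dominated by $T$'' variant described in the introduction, since $q(S)=0$ forces us into the second bullet of the introduction: one must find $T$ with $A^1_{hom}(T)\otimes A^1_{hom}(T)\to A^2_{AJ}(T)$ surjecting onto $\ima(A^2_{AJ}(S)\to A^2_{AJ}(T))$.

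The key geometric input is that the cubic surfaces in the Hesse pencil $x_0^3+x_1^3+x_2^3-3\lambda x_0x_1x_2+x_3^3=0$ are closely tied to elliptic curves (the Hesse pencil of cubic \emph{curves} $x_0^3+x_1^3+x_2^3-3\lambda x_0x_1x_2=0$ consists of elliptic curves), so the double cover $\bar S\to Y$ branched along the Hessian should be related to a quotient of a product of two elliptic curves, or at least be dominated by such a product (possibly after blowing up). Concretely, I would look for an isogeny/correspondence realizing $S$ as dominated by $E_1\times E_2$ (or by an abelian surface), or more directly identify $h^2_{tr}(S)$ as a summand of $h^1(E_1)\otimes h^1(E_2)$ for suitable elliptic curves $E_i$ attached to the parameter $\lambda$. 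Once $S$ is dominated by an abelian surface $T=E_1\times E_2$ (or by a product of curves), the surjectivity $A^1_{hom}(T)\otimes A^1_{hom}(T)\twoheadrightarrow A^2_{(2)}(T)=A^2_{AJ}(T)$ is Beauville's \cite[Proposition 4]{Beau} cited already in the proof of Theorem \ref{Gbt}, and finite--dimensionality of $h(S)$ follows from that of $h(T)$ together with the surjection $A_0(T)\to A_0(S)$ (via \cite[Theorem 3.11]{V3} as invoked in Proposition \ref{handy0}). Then Proposition \ref{handy0} — or rather the argument in its proof, run with $A=\alb(T)$ — reduces Conjecture \ref{conj} for $S$ to Vial's Theorem \cite{Ch} on $0$--cycles on the abelian variety $A$, and since here $\dim A\le 2$ one even gets the full conjecture for all $m>\binom{\dim A}{2}$, which is $\le 1<4=p_g(S)$; I should double-check the numerology so that the range $m>4$ in the statement is certainly covered. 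Finally, Rojtman's theorem \cite{Ro} passes the conclusion from $\QQ$-- back to $\ZZ$--coefficients, matching the integral statement.

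The main obstacle I anticipate is the geometric step: establishing precisely \emph{which} abelian surface or product of curves dominates Ikeda's double cover $S$ for $Y$ in the Hesse pencil, and checking that the induced map on $A^2_{AJ}$ is compatible so that the Beauville--surjectivity for $T$ really does surject onto $\ima(A^2_{AJ}(S)\to A^2_{AJ}(T))$. One natural route: the branch Hessian of a Hesse-pencil cubic is itself a highly symmetric curve, and the double cover along it may acquire an action of the Hessian group (order $216$) or a subgroup thereof, so $S$ could be a (partial) quotient of a product of two of the Hesse elliptic curves by a finite group action; one then uses that taking quotients and resolving the (rational, hence $A_0$--trivial) singularities does not change $A^1_{hom}$ or $A^2_{AJ}$ in the relevant way, and that finite--dimensionality and the intersection-product surjectivity descend to quotients (this is exactly the pattern of Proposition \ref{handy1}, which I could cite if the relevant map is a genuine automorphism quotient). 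If instead only a dominant rational map $E_1\times E_2\dashrightarrow S$ is available, I would resolve it and work with a common smooth model $T$, using birational invariance of $A^1_{hom}$ and $A^2_{AJ}$ among smooth projective surfaces, exactly as in the proof of Corollary \ref{cor4}(\rom4). Modulo pinning down this domination, everything else is a formal consequence of the results already assembled in the paper.
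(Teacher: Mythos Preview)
Your proposed route has a fatal numerical obstruction. If a smooth projective surface $T$ dominates $S$, then pullback gives an injection $H^{2,0}(S)\hookrightarrow H^{2,0}(T)$; since $p_g(S)=4$, any such $T$ must have $p_g(T)\ge 4$. An abelian surface (or a product of two elliptic curves) has $p_g=1$, so no dominant map $E_1\times E_2\dashrightarrow S$ can exist, and $h^2_{tr}(S)$ cannot sit inside $h^1(E_1)\otimes h^1(E_2)$. The ``numerology'' you flagged is in fact a red flag: your argument would yield the antisymmetrized vanishing already for $m>1$, which via Mumford's theorem contradicts $p_g(S)=4$.

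The paper's proof uses a different and larger intermediary. By Ikeda \cite{Ike}, the relevant dominating surface is the Fano surface $F(Z)$ of the cubic \emph{threefold}
\[ Z\colon\ f(x_0,\ldots,x_3)+x_4^3=0\ ,\]
viewed as a $\ZZ/3$--cover $\rho\colon Z\to\PP^3$ branched along $Y$; one has $t(S)\cong t(F(Z))^{Gal(\rho)}$ and hence $A^2_{AJ}(S)\cong A^2_{AJ}(F(Z))^{Gal(\rho)}$. The Albanese $A=\alb(F(Z))\cong J(Z)$ is $5$--dimensional, so the crude bound of Proposition \ref{handy1} only gives $m>\binom{5}{2}=10$. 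To reach $m>4$ one needs a sharper input than the general Vial theorem quoted in Proposition \ref{handy0}: namely \cite[Theorem 3.15]{Ch} (stated as Proposition \ref{factors}), which applies to abelian varieties isogenous to products of elliptic curves and to correspondences in the sub--algebra generated by symmetric divisors. The Hesse--pencil hypothesis on $Y$ enters exactly here: by Roulleau \cite{Rou}, $J(Z)$ is then isogenous to $E_0^3\times E_\lambda^2$. One builds the idempotent
\[ \Gamma=\Bigl(\sum_{g\in Gal(\rho)}\Gamma_g^{\times m}\Bigr)\circ\Bigl(\sum_{\sigma\in\Sy_m}\hbox{sgn}(\sigma)\,\Gamma_\sigma\Bigr)\circ\bigl(\pi^8_A\bigr)^{\times m}\ \in A^{5m}(A^m\times A^m)\ ,\]
observes it lies in the required sub--algebra, and checks that ${}^t\Gamma$ projects onto $\wedge^m\bigl(H^2(A)^{Gal(\rho)}\bigr)$, whose $(m,0)$--part vanishes precisely for $m>4$ since $\dim\Gr^0_F H^2(A)^{Gal(\rho)}=p_g(S)=4$. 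Proposition \ref{factors} then forces $\Gamma_\ast A^{5m}(A^m)=0$, which is exactly the needed vanishing. None of this is visible from the Hesse cubic \emph{curve} alone; the detour through the cubic threefold and its intermediate Jacobian is essential.
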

    
   \begin{proof} A first part of the argument works for arbitrary smooth cubic surfaces $Y$; only in the last step will we use that $Y$ is of a specific type.
   Let us assume $Y\subset\PP^3$ is any smooth cubic, defined by a cubic polynomial $f(x_0,\ldots,x_3)$. Let $Z\subset\PP^4$ be the smooth cubic threefold defined by
     \[ f(x_0,\ldots,x_3)+x_4^3=0\ ,\]
     so $Z$ has the structure of a triple cover
     \[ \rho\colon\ \ Z\ \to\ \PP^3 \]
     branched along $Y$.
     Let $F(Z)$ denote the Fano surface of lines contained in $Z$. Ikeda \cite{Ike} shows that there is a dominant rational map of degree $3$
     \[ f\colon\ \ F(Z)\ \dashrightarrow\ S \ ,\]
     and an isomorphism
     \[ f^\ast\colon\ \ H^2_{tr}(S,\QQ)\ \xrightarrow{\cong}\ H^2_{tr}(F(Z),\QQ)^{Gal(\rho)}\ .\]
     This implies that there is an isomorphism of homological motives
     \begin{equation}\label{homiso} {}^t \Gamma_f\colon\ \ \ t(S)\ \xrightarrow{\cong}\ t(F(Z))^{Gal(\rho)}:=(F(Z),{1\over 3}\sum_{g\in Gal(\rho)} \Gamma_g\circ \pi^2_{tr},0)\ \ \ \hbox{in}\ \MM_{\rm hom}\ .\end{equation}
   (Here for any surface $T$, the motive $t(T):=(T,\pi^2_{tr},0)\in\MM_{\rm rat}$ denotes the {\em transcendental part of the motive\/} as in \cite{KMP}.)
   
   According to \cite{Diaz} and \cite{22}, the Fano surface $F(Z)$ has finite--dimensional motive (in the sense of Kimura \cite{Kim}, \cite{An}, \cite{J4}). The surface $S$, being rationally dominated by $F(Z)$, also has finite--dimensional motive. Thus, one may upgrade (\ref{homiso}) to an isomorphism of Chow motives
    \[ {}^t \Gamma_f\colon\ \ \ t(S)\ \xrightarrow{\cong}\ t(F(Z))^{Gal(\rho)}\ \ \ \hbox{in}\ \MM_{\rm rat}\ .\]   
    In particular, this implies that there is an isomorphism of Chow groups
    \[ f^\ast\colon  A^2_{hom}(S)=A^2_{AJ}(S)\ \xrightarrow{\cong}\ A^2_{AJ}(F(Z))^{Gal(\rho)}\ .\]
%
       
   Let $A$ be the 5--dimensional Albanese variety of $F(Z)$ (which is isomorphic to the intermediate Jacobian of $Z$). As observed in \cite{Diaz}, the inclusion $F(Z)\hookrightarrow A$ induces an isomorphism
     \[ A^2_{(2)}(A)\cong A^2_{AJ}(F(Z))\ .\]
     In particular, there is a restriction--induced isomorphism
     \[    A^2_{(2)}(A)^{Gal(\rho)}\cong A^2_{AJ}(F(Z))^{Gal(\rho)}\ ,\]       
     where we simply use the same letter $\rho$ for the action induced by the triple cover $\rho\colon Z\to\PP^3$.
     
    Consequently, it suffices to prove a version of Conjecture \ref{conj} for cycles in $ A^2_{(2)}(A)^{Gal(\rho)}$. Also, using K\"unnemann's hard Lefschetz theorem (for some $Gal(\rho)$--invariant ample divisor), one reduces to a statement for cycles in $ A^5_{(2)}(A)^{Gal(\rho)}$ (i.e., $0$--cycles). This last statement can be proven, subject to some restrictions on the cubic surface $Y$, thanks to the following result:  
    
  \begin{proposition}[Vial \cite{Ch}]\label{factors} Let $B$ be an abelian variety of dimension $g$, and assume $B$ is isogenous to $ E_1^{r_1}\times E_2^{r_2}\times E_3^{r_3}$, where the $E_j$ are elliptic curves. Let $\Gamma\in A^g(B\times B)$ be an idempotent which lies in the sub--algebra generated by symmetric divisors. Assume that $\Gamma^\ast H^{j,0}(B)=0$ for all $j$. Then also 
    \[ \Gamma_\ast A^g(B)=0\ .\]
   \end{proposition}    
      
    \begin{proof} This is a special case of \cite[Theorem 3.15]{Ch}, whose hypotheses are more general.
        \end{proof}  
      
   It remains to verify that Proposition \ref{factors} applies to our set--up. If the cubic threefold $Z=Z_\lambda$ is in the pencil
     \[ x_0^3 + x_1^3 +x_2^3 -3\lambda x_0 x_1 x_2 + x_3^3 +x_4^4=0 \ ,\]
    its intermediate Jacobian $A$ is isogenous to $E_0^3\times E_\lambda^2$, where $E_\lambda$ is the elliptic curve
    \[ x_0^3 + x_1^3 +x_2^3 -3\lambda x_0 x_1 x_2=0\ \]
    \cite{Rou}. 
    We can apply Proposition \ref{factors} with $B:=A^m$ and
      \[ \Gamma:=  \bigl(\sum_{g\in Gal(\rho)} \Gamma_g\times \cdots \times\Gamma_g\bigr)  \circ   \bigl(\sum_{\sigma\in \Sy_m}  \hbox{sgn}(\sigma)\,\Gamma_\sigma\bigr) \circ \bigl(  \pi^8_A\times \cdots \times \pi^8_A\bigr) \ \ \ \in A^{5m}(A^m\times A^m)\ .\]
      Here $\pi^8_A$ is part of the Chow--K\"unneth decomposition of \cite{DM}, with the property that 
        \[ A^5_{(2)}(A)=(\pi^8_A)_\ast A^5(A)\ .\]
    Since $g\in Gal(\rho)$ and $\sigma\in \Sy_m$ are homomorphisms of abelian varieties, and the $\pi^8_A$ are symmetrically distinguished (in the sense of O'Sullivan \cite{OS}) and generically defined (in the sense of Vial \cite{Ch}), the correspondence $\Gamma$ is in the sub--algebra generated by symmetric divisors \cite[Proposition 3.11]{Ch}. In particular, the correspondence $\Gamma$ is symmetrically distinguished, and so (since it is idempotent in cohomology) idempotent. 
        
    The correspondence ${}^t \Gamma$ acts on cohomology as projector on 
      \[ \wedge^m \bigl( H^2(A)^{Gal(\rho)}\bigr)\ .\]
      Since 
      \[ \dim \Gr^0_F H^2(A)^{Gal(\rho)}=p_g(S)=4\ ,\]
   we have that $\Gamma^\ast=({}^t \Gamma)_\ast$ is zero on $H^{j,0}(B)$ as soon as $m>4$. Applying Proposition \ref{factors}, we can prove Conjecture \ref{conj} for 
   $A^5_{(2)}(A)^{Gal(\rho)}$ (and hence, as explained above, also for $A^2_{AJ}(S)$): let $b_1,\ldots,b_m\in A^5_{(2)}(A)^{Gal(\rho)}$, where $m>4$. Then
   \[ \sum_{\sigma\in\Sy_m} \hbox{sgn}(\sigma)\, b_{\sigma(1)}\times b_{\sigma(2)}\times\cdots\times b_{\sigma(m)}=\Gamma_\ast (b_1\times b_2\times\cdots\times b_m)=0\ \ \ \hbox{in}\ A^{5m}(A^m)\ .\]
            \end{proof}

  \begin{remark} The argument of Corollary \ref{last} also applies to double covers of some other cubic surfaces. For instance, let $Y$ be a cubic surface, let $S$ be the double cover as in theorem \ref{ike}, and let $J(Z)$ be the intermediate Jacobian of the associated cubic threefold. If $J(Z)$ is $\rho$--maximal, then $S$ verifies conjecture \ref{conj}. Indeed, $\rho$--maximality implies that $J(Z)$ is isogenous to $E^5$ for some elliptic curve $E$ \cite[Proposition 3]{Beau3}, and so Proposition \ref{factors} applies.
   \end{remark}

%
%
%
%
%
%
%
%

\vskip1cm
\begin{nonumberingt} Thanks to the wonderful staff of the Executive Lounge at the Schilik Math Research Institute.
\end{nonumberingt}

\vskip1cm

\end{document}